\renewcommand{\todo}[2][]{\tikzexternaldisable\@todo[#1]{#2}\tikzexternalenable}
\theoremstyle{plain}
   \newtheorem{theorem}{Theorem}[section]
   \newtheorem{lemma}[theorem]{Lemma}
   \newtheorem{conjecture}[theorem]{Conjecture}
\theoremstyle{definition}
   \newtheorem{definition}[theorem]{Definition}
   \newtheorem{example}[theorem]{Example}
   \newtheorem{remark}[theorem]{Remark}
\numberwithin{equation}{section}
\newcommand{\CC}{{\mathbb {C}}}
\newcommand{\QQ}{{\mathbb {Q}}}
\newcommand{\ZZ}{{\mathbb {Z}}}
\newcommand{\ch}{{\operatorname{ch}}}
\newcommand{\SSYT}{{\rm SSYT}}
\DeclareMathOperator*{\wt}{wt}
\newcommand\scalemath[2]{\scalebox{#1}{\mbox{\ensuremath{\displaystyle #2}}}}
\newlength{\mysizetiny}
\newlength{\mysizesmall}
\newlength{\mysize}
\newlength{\mysizelarge}
\begin{document}

\title{Dual canonical bases for unipotent groups and base affine spaces}
\author{Jian-rong Li}
\address{Jian-Rong Li, Faculty of Mathematics, University of Vienna, Oskar-Morgenstern-Platz 1, 1090 Vienna, Austria.} 
\date{}

\maketitle

\begin{abstract}
Denote by $N \subset SL_k$ the subgroup of unipotent upper triangular matrices. In this paper, we show that the dual canonical basis of $\mathbb{C}[N]$ (and base affine spaces) can be parameterized by semi-standard Young tableaux. Moreover, we give an explicit formula for every element in the the dual canonical basis using the data of the corresponding semistandard Young tableau. We apply our results to study cluster variables in $\mathbb{C}[N]$. 
\end{abstract}

\tableofcontents

\section{Introduction}

Quantum groups (or quantized universal enveloping algebras) were introduced independently by Drinfeld \cite{Dri85} and Jimbo \cite{Jim85} around 1985.

Let $\mathfrak{g}$ be a simple complex Lie algebra of type $A,D,E$. Denote by $\mathfrak{g} = \mathfrak{n} \oplus \mathfrak{h} \oplus \mathfrak{n}^-$ a triangular decomposition of $\mathfrak{g}$. Let $q$ be an indeterminate and let $U_q(\mathfrak{g}) = U_q(\mathfrak{n}) \otimes U_q(\mathfrak{h}) \otimes U_q(\mathfrak{n}^-)$ be the Drinfeld-Jimbo quantum group over $\mathbb{C}(q)$. Inspired by a seminal work of Ringel \cite{Ri90}, Lusztig introduced a canonical basis ${\bf B}$ of $U_q(\mathfrak{n})$ with remarkable properties in \cite{Lus90, Lus91}. In \cite{Kas91}, Kashiwara found an alternative approach to the canonical basis of \cite{Lus90} which made sense in the more general context of Kac-Moody Lie algebras. 

The quantum algebra $U_q(\mathfrak{n})$ is endowed with a distinguished scalar product. Let ${\bf B}^*$ be the basis of $U_q(\mathfrak{n})$ adjoint to the canonical basis ${\bf B}$ with respect to this scalar product.
The dual canonical basis is defined to be image of
the basis ${\bf B}^*$ under the identification of the graded dual of $U_q(\mathfrak{n})$ with $A_q(\mathfrak{n})$. The graded dual $A_q(\mathfrak{n})$ of $U_q(\mathfrak{n})$ can be regarded as the quantum coordinate ring of the unipotent group $N$ with Lie algebra $\mathfrak{n}$ (see e.g. \cite{GLS13, HL15}). 
When $q \to 1$, the basis ${\bf B}^*$ specializes to a basis of the coordinate ring $\CC[N]$ and it is called the dual canonical basis of $\CC[N]$.

Canonical basis and dual canonical basis (in particular, the dual canonical basis of $\CC[N]$) has been studied intensively in the literature using different methods and many important results are obtained, see e.g. \cite{BZ93, BZ96, BFZ96, BFZ05, FZ99, FZ02, FZ03, GHKK18, GLS08a, GLS08, GLS11, GLS13, HL15, Kam07, KKKO18, KL09, KR11, KV00, Nak11, Qin17, Rou08, VV11}.         

On the other hand, more work is needed to give a full description of the dual canonical basis, see e.g. the paragraph before the last paragraph of Section 2 in \cite{GLS08}.

The aim of this paper is to give an explicit description of the dual canonical basis of $\CC[N]$ in the case that $N \subset SL_k$ is the subgroup of unipotent upper triangular matrices, and the dual canonical basis of $\widetilde{\mathbb{C}[SL_k]^{N^-}}$ which is closely related to $\CC[N]$. 

Let $N^- \subset G=SL_k$ be the subgroup of unipotent lower triangular matrices. The group $N^-$ acts on $G$ by left multiplication. Denote by $\mathbb{C}[SL_k]^{N^-}$ the ring of $N^-$-invariant regular functions on $SL_k$. Explicit description of the dual canonical basis of $\mathbb{C}[SL_k]^{N^-}$ is still an open problem, see e.g. the end of Section 6.5 in \cite{FWZ20}. 

The algebra $\mathbb{C}[SL_k]^{N^-}$ is of high importance because it carries exactly one copy of each polynomial $GL_k$ representation exactly once. Thus, this paper is addressing the dual canonical basis for all $GL_k$ representations at once.

Our main result is to give an explicit formula of dual canonical basis elements using the data of semistandard Young tableaux. The dual canonical basis of $\mathbb{C}[N]$ is studied using geometric method in \cite{Ari96, CG97, Gin87, Zel81, Zel85}. The description of the dual canonical basis using semistandard Young tableaux is useful in studying the dual canonical basis combinatorially. For example, it is useful in classifying cluster variables in the dual canonical basis, see Section \ref{sec:application to classification of cluster variables}. We also give a description of mutations in the cluster algebra $\mathbb{C}[N]$ using tableaux. This description agrees with a recent work \cite[Section 7.2]{BDK21} of Bai, Dranowski, and Kamnitzer.

Brundan \cite{Bru06} gave a formula for the entries of the unitriangular transition matrices between the standard monomial and dual canonical bases of the irreducible polynomial representations of $U_q(\mathfrak{gl}_n)$ in terms of Kazhdan–Lusztig polynomials. The main difference between Brundan's result and our result is that we work directly on $\mathbb{C}[N]$ and $\mathbb{C}[SL_k]^{N^-}$. Brundan's methods used quantum Schur-Weyl duality and our method is to apply categorifications of cluster algebras using finite dimensional representations of type A quantum affine algebras.  

The ring $\CC[N]$ has a cluster algebra structure which can be obtained from a cluster algebra structure on $\mathbb{C}[SL_k]^{N^-}$ by identifying leading principal minors with $1$ \cite{FWZ20}. Denote by $\widetilde{\mathbb{C}[SL_k]^{N^-}}$ the quotient of $\mathbb{C}[SL_k]^{N^-}$ by identifying the leading principal minors with $1$. The algebras $\CC[N]$ and $\widetilde{\mathbb{C}[SL_k]^{N^-}}$ have the same cluster algebra structure (cf. Section \ref{subsec:cluster structure on SLkN and CN}).

Denote by ${\rm SSYT}(k-1, [k], \sim)$ a certain quotient of the monoid ${\rm SSYT}(k-1, [k])$ of semi-standard tableaux with at most $k-1$ rows and with entries in $[k]$ (cf.  Section \ref{sec:monoid of semi-standard tableaux}). Our main result is the following. 
\begin{theorem} [{Theorems \ref{thm:chT form dual canonical basis} and \ref{thm: chT for SLkN}}] \label{thm:dual canonical basis are chT}
For a tableau $T$, we define in Section \ref{subs:formula for chT}, a tableau $T'$ each of whose columns is a fundamental tableau with $m$ many columns, a permutation $w_T$ on $m$ letters, and a monomial $\Delta_{u; T'}$ for all permutations $u$ on $m$ letters. 

The set $\{\ch_{\CC[N]}(T): T \in {\rm SSYT}(k-1, [k], \sim)\}$ (respectively, $\{\ch_{\widetilde{\mathbb{C}[SL_k]^{N^-}}}(T): T \in {\rm SSYT}(k-1, [k], \sim)\}$) is the dual canonical basis of $\CC[N]$ (respectively, $\widetilde{\mathbb{C}[SL_k]^{N^-}}$), 
\begin{align*} 
& \ch_{\CC[N]}(T) = \sum_{u \in S_m} (-1)^{\ell(uw_T)} p_{uw_0, w_Tw_0}(1) \Delta_{u; T'} \in \CC[N],  \\
& \ch_{\widetilde{\CC[SL_k]^{N^-}}}(T) = \sum_{u \in S_m} (-1)^{\ell(uw_T)} p_{uw_0, w_Tw_0}(1) \Delta_{u; T'} \in \widetilde{\CC[SL_k]^{N^-}},
\end{align*}  
where $w_0 \in S_m$ is the longest permutation and $p_{y,y'}(t)$ is a Kazhdan-Lusztig polynomial \cite{KL}.
\end{theorem}

The difference between the formulas for $\ch_{\CC[N]}(T)$ and $\ch_{\widetilde{\CC[SL_k]^{N^-}}}(T)$ is that the flag minors in the formula for $\ch_{\CC[N]}(T)$ are flag minors in $\CC[N]$ while the flag minors in the formula for $\ch_{\widetilde{\CC[SL_k]^{N^-}}}(T)$ are flag minors in $\widetilde{\CC[SL_k]^{N^-}}$. We write $\ch_{\CC[N]}(T)$ (respectively, $\ch_{\widetilde{\CC[SL_k]^{N^-}}}(T)$) as $\ch(T)$ if there is no confusion. 

The basic approach of this paper including the proof technique of Theorem \ref{thm:dual canonical basis are chT} is very similar to the approach in \cite{CDFL}. On the other hand, there are differences between the results in this paper and the results in \cite{CDFL}. The formula in Theorem 5.8 in \cite{CDFL} involves only rectangular semistandard tableaux while Theorem \ref{thm:dual canonical basis are chT} involves semistandard tableaux of any shape.

To prove Theorem \ref{thm:dual canonical basis are chT}, we applied Hernandez-Leclerc's monoidal categorification of $\CC[N]$ \cite{HL15}, a $q$-character formula in \cite[Theorem 1.3]{CDFL} which is obtained from a result due to Arakawa-Suzuki \cite{AS} (see also Section 10.1 in \cite{LM18}, and \cite{BaCi, Hen}) and from the quantum affine Schur-Weyl duality \cite{CP96b}, and the following theorem. 

\begin{theorem} [{Theorem \ref{thm: parameterization of simple modules by tableaux}}] 
There is an isomorphism $\mathcal{P}^+_{k, \triangle} \to {\rm SSYT}(k-1, [k],\sim)$ of monoids.
\end{theorem}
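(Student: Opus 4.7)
The plan is to construct the isomorphism $\Phi: \mathcal{P}^+_{k,\triangle} \to {\rm SSYT}(k-1,[k],\sim)$ on generators and then extend multiplicatively. Both monoids are generated by their ``fundamental'' elements: on the left, by the fundamental dominant monomials $Y_{i,a}$ with $(i,a)$ in the triangular region $\triangle$; on the right, by the fundamental columns, i.e., single columns of height $i \in [k-1]$ with strictly increasing entries in $[k]$. The core of the construction is a bijection between these two sets of generators, sending $Y_{i,a}$ to the unique fundamental column whose shape is $(1^i)$ and whose entries form the $i$-subset of $[k]$ determined by the shift $a$ via the standard correspondence used in the combinatorics of $q$-characters for type $A_{k-1}^{(1)}$. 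A count shows that the triangular region $\triangle$ contributes exactly $\binom{k}{i}$ lattice points at level $i$, matching the number of $i$-subsets of $[k]$, so the generator-level bijection is forced.

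Next I would extend $\Phi$ multiplicatively by sending a product $\prod_j Y_{i_j,a_j}$ to the SSYT obtained by concatenating the corresponding fundamental columns and rearranging them in the order dictated by the semi-standard condition. To verify that this descends to the quotient by $\sim$, I must check that two concatenations of the same multiset of fundamental columns yield $\sim$-equivalent tableaux, which should be exactly the defining feature of $\sim$ in Section \ref{sec:monoid of semi-standard tableaux}. Granting this, $\Phi$ is a well-defined homomorphism of monoids. Surjectivity follows because every class in ${\rm SSYT}(k-1,[k],\sim)$ admits a representative obtained by concatenating its columns, each of which is the image of a unique $Y_{i,a}$. Injectivity follows because, from any representative, one recovers both the multiset of column shapes (giving the indices $i_j$) and the multiset of column contents (giving the shifts $a_j$), hence the original monomial.

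The main obstacle is establishing that $\sim$ captures exactly equality of dominant monomials, and nothing more. Since $\mathcal{P}^+_{k,\triangle}$ is a free commutative monoid on the generators $Y_{i,a}$, the forward direction reduces to checking that two semi-standard arrangements of the same multiset of fundamental columns are $\sim$-related, which should be built into the definition of $\sim$. The reverse direction is more delicate: one must rule out ``accidental'' $\sim$-equivalences that collapse genuinely distinct monomials. This requires carefully unpacking each $\sim$-generating move and matching it to a commutation of fundamental monomials that leaves the product invariant, while verifying no spurious relations are present. This combinatorial matching between the column-level moves defining $\sim$ and free-commutativity of the $Y_{i,a}$ is where I expect the bulk of the technical work, and also where the specific shape of the triangular region $\triangle$ plays its essential role in ensuring that all required columns arise (and no forbidden ones appear) on the tableau side.
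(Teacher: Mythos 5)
Your proposal has a genuine and load-bearing error in the generator-level bijection. In this paper a \emph{fundamental tableau} is not an arbitrary one-column tableau: for $i \in I=[k-1]$ and $p \in [i]$, the tableau $T^{(i,p)}$ has $p$ rows with the very specific entries $\{1,2,\ldots,p-1,\, p+k-i\}$. So for a fixed height $p$ there are exactly $k-p$ fundamental columns (equivalently, for a fixed color $i$ there are exactly $i$ lattice points in $\triangle$), giving $\binom{k}{2}$ generators on each side. Your count of $\binom{k}{i}$ lattice points at level $i$, and your identification of the generators on the tableaux side with all $i$-subsets of $[k]$, would give $2^k-2$ generators, which is wrong already for $k=4$. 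This is not a cosmetic slip: if you tried to build a map sending $Y_{i,a}$ to an arbitrary column, the $\cup$-factorization of a tableau into columns is not unique (e.g.\ $\{1,4,5\}\cup\{2,3\}=\{1,3,5\}\cup\{2,4\}$ in ${\rm SSYT}(5,[6])$), so multiplicative extension does not produce a well-defined bijection. The essential combinatorial fact the paper uses, and which your sketch never isolates, is Lemma~\ref{lem:factorization of tableau as product of fundamental tableaux}: every class in ${\rm SSYT}(k-1,[k],\sim)$ has a \emph{unique} factorization into the restricted fundamental columns. It is this freeness on the correct (much smaller) generating set that makes the generator-level matching sufficient.

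Your framing of $\sim$ is also off. In this paper $\sim$ is not generated by column-shuffling moves that one needs to ``match against free commutativity''; it is defined by $T\sim T'$ iff $T_{\rm red}=T'_{\rm red}$, where $T_{\rm red}$ removes a maximal trivial (i.e.\ $\{1,\ldots,p\}$-type) factor. The quotient exists precisely to discard those trivial columns, which are the images of the identity. Once the unique fundamental factorization is in hand, well-definedness and injectivity of the candidate map $\Psi:{\rm SSYT}(k-1,[k],\sim)\to\mathcal{P}^+_{k,\triangle}$ are immediate, and no case analysis of ``moves'' is needed. Finally, note that the paper's actual proof does \emph{not} proceed purely combinatorially: it defines the forward map $\widetilde{\Phi}(M)={\rm Top}(\Phi(L(M)))$ through the Hernandez--Leclerc isomorphism and the highest-weight term of the $q$-character expansion, establishes the homomorphism property via the decomposition $[L(M)][L(M')]=[L(MM')]+(\text{lower weight})$, and only then checks $\Psi\widetilde\Phi=\id=\widetilde\Phi\Psi$ on generators. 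A purely combinatorial argument along your lines could in principle be made to work, but only after correcting the generating set and supplying the unique factorization lemma; as written, the count is wrong and the main technical lemma is missing.
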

Here $\mathcal{P}^+_{k, \triangle}$ is a certain submonoid of the monoid of dominant monomials (cf. Section \ref{subs:monoidal categorification of CN}). 

\begin{remark}
Though our combinatorial results bear a similarity with \cite{CDFL}, they are different: there, the monoid was free on small gap tableau (which happen to correspond to fundamental l-weights) while here they are free on one-column tableaux whose entries are of the form $1, 2, \ldots, p − 1, p+k-i$, where $1 \le p \le i \le k-1$. Although both of small gap tableaux in \cite{CDFL} and fundamental tableaux in this paper correspond to fundamental l-weights, the form of fundamental tableaux in this paper is different than the form of small gap tableaux in \cite{CDFL}. 
\end{remark}

By Theorem \ref{thm:dual canonical basis are chT}, the dual canonical basis of $\CC[N]$ (respectively, $\widetilde{\CC[SL_k]^{N^-}}$) is parametrized by semi-standard tableaux in ${\rm SSYT}(k-1, [k], \sim)$ and every dual canonical basis element is of the form $\ch(T)$ for some $T \in {\rm SSYT}(k-1, [k], \sim)$. In \cite{KKKO18, Qin17}, it is shown that cluster monomials in $\CC[N]$ (respectively, $\widetilde{\CC[SL_k]^{N^-}}$) belong to the dual canonical basis. Therefore every cluster variable in $\CC[N]$ (respectively, $\widetilde{\CC[SL_k]^{N^-}}$) is also of the form $\ch(T)$. 

Denote by $\Delta_J(x) = \Delta_{ \{1, \ldots, |J|\}, J }(x)$ the minor of a matrix $x$ which takes rows $1, \ldots, |J|$ and columns $J$.  
\begin{example}
The cluster variables (not including frozen variables) of $\CC[N]$, $N \subset SL_4$, (respectively, $\widetilde{\CC[SL_4]^N}$) are indexed by the following tableaux:
\begin{align*}
\scalemath{0.9}{
T_1 = \begin{ytableau} 2 \end{ytableau}, \ T_2 = \begin{ytableau} 3 \end{ytableau}, \ T_3 = \begin{ytableau} 1\\ 3 \end{ytableau}, \ T_4 = \begin{ytableau} 1\\ 4 \end{ytableau}, \ T_5 = \begin{ytableau} 2\\ 3 \end{ytableau}, \ T_6 = \begin{ytableau} 2\\ 4 \end{ytableau}, \ T_7 = \begin{ytableau} 1\\ 2\\ 4 \end{ytableau}, \ T_8 = \begin{ytableau} 1\\ 3\\ 4 \end{ytableau}, \ T_9 = \begin{ytableau} 1 & 3\\ 2 \\ 4 \end{ytableau}. }
\end{align*}
In $\widetilde{\CC[SL_4]^N}$ and $\CC[N]$, we have that 
\begin{align*}
& \ch(T_1) = \Delta_2, \ \ch(T_2) = \Delta_3, \ \ch(T_3) = \Delta_{13}, \ \ch(T_4) = \Delta_{14}, \ \ch(T_5) = \Delta_{23}, \\
& \ch(T_6) = \Delta_{24}, \ \ch(T_7) = \Delta_{124}, \ \ch(T_8) = \Delta_{134}, \  \ch(T_9)= \Delta_3 \Delta_{124} - \Delta_4 \Delta_{123}.
\end{align*}

In both of $\CC[N]$ and $\widetilde{\CC[SL_4]^N}$, all flag minors are cluster variables or frozen variables. On the other hand, in both $\CC[N]$ and $\widetilde{\CC[SL_4]^N}$, there is some matrix minor (not flag minor) which is not a cluster variable. In $\CC[N]$, the matrix minor $\Delta_{13,24} = x_{12}x_{34}$ is not a cluster variable. In $\widetilde{\CC[SL_4]^N}$, the matrix minor $\Delta_{13,24} = x_{12} x_{34} - x_{14} x_{32}$ is also not a cluster variable.  

In $\CC[N]$, we have that $\ch(T_9) = \Delta_3 \Delta_{124} - \Delta_4 \Delta_{123} =x_{13}x_{34}-x_{14} = \Delta_{13,34}$. Therefore all cluster variables and frozen variables in $\CC[N]$ are matrix minors. 

In $\widetilde{\CC[SL_4]^N}$, the cluster variable $\ch(T_9) = \Delta_3 \Delta_{124} - \Delta_4 \Delta_{123}$ is not a matrix minor. 
\end{example}

Every tableau $T$ in ${\rm SSYT}(k-1,[k])$ can be written as $T = T'' \cup T'$ where ``$\cup$'' is the multiplication in the monoid ${\rm SSYT}(k-1,[k])$ (cf. Section \ref{sec:monoid of semi-standard tableaux}), $T'$ is a tableau whose columns are fundamental tableaux and $T''$ is a fraction of two trivial tableaux (cf. Section \ref{sec:monoid of semi-standard tableaux}). 

For a tableau $T$ with columns $T_1, \ldots, T_r$, we denote by $\Delta_T = \Delta_{T_1} \cdots \Delta_{T_r}$ the {\sl standard monomial} of $T$. For a fraction $ST^{-1}$ of two tableaux $S,T$, we denote $\Delta_{ST^{-1}} = \Delta_S \Delta_T^{-1}$ (cf. Section \ref{subsec:weights on tableaux and product of flag minors}).

For $T \in {\rm SSYT}(k-1,[k])$, we define $\ch'(T) = \Delta_{T''} \ch_{\widetilde{\CC[SL_k]^{N^-}}}(T')$.
We conjecture that $\{\ch'(T): T \in {\rm SSYT}(k-1,[k])\}$ is the dual canonical basis of $\CC[SL_k]^{N^-}$, see Conjecture \ref{conj:dual canonical basis of CSLkN^-}. 

We also apply our results to classification of cluster variables in $\CC[N]$, $N \subset SL_6$, up to $4$-column tableaux, cf. Section \ref{sec:application to classification of cluster variables}. 

We showed that the numbers of rank $1,2,3,4$ tableaux (not including frozen variables) which are cluster variables in $\CC[N]$, $N \subset SL_6$, are $52, 118, 170, 212$ respectively. Moreover, we found the simplest non-real tableau $T = \begin{ytableau} 1 & 3 \\ 2 & 5 \\ 4 \\ 6 \end{ytableau}$. It corresponds to a prime element $\ch(T)$ in the dual canonical basis of $\CC[N]$ which is not a cluster variable, see (\ref{eq:chT of non-real element in CN}). This tableau corresponds to the simple $U_q(\widehat{\mathfrak{sl}_6})$-module $L(Y_{3, -1} Y_{4, -4} Y_{4, 2} Y_{5, -1} )$, see Theorem \ref{thm: parameterization of simple modules by tableaux}. This module is very similar to the non-real $U_q(\widehat{\mathfrak{sl}_6})$-module $L(Y_{2,-2}Y_{3,-5}Y_{3,1}Y_{4,-2})$ (after translating to the language of dominant monomials, see Section \ref{sec:application to classification of cluster variables}) in Section 2.7 in \cite{Lec}. This module is also similar to the non-real $U_q(\widehat{\mathfrak{sl}_4})$-module $L(Y_{1,4}Y_{2,1}Y_{2,7}Y_{3,4})$ in Section 13.6 in \cite{HL10}. 

We will study the problem of classification of cluster variables in $\CC[N]$ systematically in another work. 

The paper is organized as follows. In Section 2, we give some background on cluster algebras, quantum affine algebras, cluster structure on $\CC[N]$ and $\CC[SL_k]^{N^-}$, and Hernandez-Leclerc's monoidal categorification of $\CC[N]$. In Section 3, we describe the monoid of semi-standard Young tableaux. In Section 4, we show that a certain submonoid of the monoid of dominant monomials is isomorphic to the monoid of semi-standard tableaux. In Section 5, we give a formula for every element in the dual canonical basis of $\CC[N]$ (respectively, $\widetilde{\CC[SL_k]^{N^-}}$). In Section 6, we describe the mutation rule in $\CC[N]$ (respectively, $\widetilde{\CC[SL_k]^{N^-}}$) in terms of tableaux. In Section \ref{sec:application to classification of cluster variables}, we apply our results to classification of cluster variables in $\CC[N]$, $N \subset SL_6$, up to $4$-column tableaux.

\subsection*{Acknowledgements}
The author would like thank the anonymous referee for his/her very helpful comments and suggestions. The author is supported by the Austrian Science Fund (FWF): M 2633-N32 Meitner Program and P 34602 Einzelprojekte.

\section{Preliminary}

\subsection{Cluster algebras}
Fomin and Zelevinsky introduced cluster algebras \cite{FZ02} in order to understand in a concrete and combinatorial way the theory of total positivity (cf. \cite{Lus94, Lus98}) and canonical bases in quantum groups (cf. \cite{Lus90, Lus91, Kas91}). We recall the definition of cluster algebras. 

A quiver $Q$ is an oriented graph given by a set of vertices $Q_0$, a set of arrows $Q_1$, and two maps $s,t: Q_1 \to Q_0$ taking an arrow to its source and target, respectively.

Let $Q$ be a finite quiver without loops or $2$-cycles. For a vertex $k \in Q_0$, the {\sl mutated quiver} $\mu_k(Q)$ is a quiver with the same set of vertices as $Q$, and its set of arrows is obtained by
the following procedure:
\begin{enumerate}
\item[(i)] add a new arrow $i \to j$ for every existing pair of arrows $i \to k$, $k \to j$; 

\item[(ii)] reverse the orientation of every arrow with target or source equal to $k$,

\item[(iii)] erase every pair of opposite arrows possibly created by (i). 
\end{enumerate}

Let $m \ge n$ be positive integers and let $\mathcal{F}$ be an ambient field of rational functions in $n$ independent variables over $\QQ(x_{n+1}, \ldots, x_m)$. A {\sl seed} in $\mathcal{F}$ is a pair $({\bf x}, Q)$, where ${\bf x} = (x_1, \ldots, x_m)$ is a free generating set of $\mathcal{F}$, and $Q$ is a quiver (without loops or $2$-cycles) with vertices $[m]$ whose vertices $1, \ldots, n$ are called mutable and whose vertices $n+1, \ldots, m$ are called frozen. For a seed $({\bf x}, Q)$ in $\mathcal{F}$ and $k \in [n]$, the {\sl mutated seed} $\mu_k({\bf x}, Q)$ in direction $k$ is $({\bf x}', \mu_k(Q))$, where ${\bf x}' = (x_1', \ldots, x_m')$ with $x_j'=x_j$ for $j\ne k$ and $x_k' \in \mathcal{F}$ is determined by the {\sl exchange relation}:
\begin{align*}
x_k' x_k = \prod_{\alpha \in Q_1, s(\alpha)=k} x_{t(\alpha)} + \prod_{\alpha \in Q_1, t(\alpha)=k} x_{s(\alpha)}.
\end{align*} 

The mutation class of a seed $({\bf x}, Q)$ is the set of all seeds obtained from $({\bf x}, Q)$ by a finite sequence of mutations. For every seed $((x_1', \ldots, x_n', x_{n+1}, \ldots, x_m), Q')$ in the mutation class, the set $\{x_1', \ldots, x_n', x_{n+1}, \ldots, x_m\}$ is called a cluster, $x_1', \ldots, x_n'$ are called cluster variables, and $x_{n+1}, \ldots, x_m$ are called frozen variables. The cluster algebra $\mathcal{A}({\bf x}, Q)$ is the $\ZZ[x_{n+1}, \ldots, x_{m}]$-subalgebra of $\mathcal{F}$ generated by all cluster variables. A {\sl cluster monomial} is a product of non-negative powers of cluster variables belonging to the same cluster.

\subsection{\texorpdfstring{\text{Cluster structure on $\CC[N]$ and $\CC[SL_k]^{N^-}$}}{Cluster structure on the coordinate ring of base affine space and unipotent group}}\label{subsec:cluster structure on SLkN and CN}

In this subsection, we recall the cluster structure on $\CC[N]$ and $\CC[SL_k]^{N^-}$, cf. \cite{BFZ96, BFZ05, FZ99, FZ03, GLS08}. 

Let $V \cong \CC^k$ be a $k$-dimensional complex vector space. By choosing a basis in $V$, one can identify $G = SL_k$ with the special linear group $SL(V)$ complex matrices with determinant $1$. The subgroup $N^- \subset G$ of unipotent lower triangular matrices acts on $G$ by left multiplication. This action induces an action of $N^-$ on the coordinate ring $\CC[G]$. Denote by $\CC[G]^{N^-}$ the ring of $N^-$-invariant regular functions on $G$. The ring $\CC[SL_k]^{N^-}$ has a cluster algebra structure whose initial cluster is given as follows.

For a $n \times n$ matrix $z$ and $J',J \subset [n]$ ($|J'| = |J|$), denote by $\Delta_{J', J}(z)$ the determinant of the submatrix of $z$ with rows labeled by $J'$ and columns labeled by $J$. In the case that $J' = \{1,2,\ldots, |J|\}$, we write $\Delta_{J} = \Delta_{J', J}$ and it is called a {\sl flag minor}. 

Let $I=[k-1]$ be the set of the vertices of the Dynkin diagram of $\mathfrak{sl}_k$. Let $Q_{k, \triangle}$ be a quiver with the vertex set $V_{k, \triangle}=\{ (i, p): i \in I \cup \{k\}, p \in [i] \} \setminus \{(k,k)\}$ and with edge set: 
\begin{align*}
& (i,p) \to (i+1,p+1), \quad (i, p) \to (i, p-1), \quad (i, p) \to (i-1, p),
\end{align*}
see Figure \ref{fig:initial cluster C[N] k is 5}. The vertices $(i,i)$, $i \in I$ and $(k, p)$, $p \in I$ are frozen. 

For $i \in I$, $p \in [i]$, denote $\Delta^{(i,p)} = \Delta_J$, where $J = \{1,2,\ldots, p-1, p+k-i\}$. Attach to the vertex $(i,p)$ the flag minor $\Delta^{(i,p)}$, $i \in I$, $p \in [i]$. An initial cluster of $\CC[SL_k]^{N^-}$ consists of the initial quiver $Q_{k, \triangle}$ and initial cluster variables $\Delta^{(i,p)}$, $i \in I$, $p \in [i]$. Figure \ref{fig:initial cluster C[N] k is 5} is the initial cluster for $\CC[SL_k]^{N^-}$ ($k=5$) if we replace $\Delta_{1,\ldots,i}=1$ by $\Delta_{1,\ldots,i}$, $i \in [k-1]$. 

In Figure \ref{fig:initial cluster C[N] k is 5}, 
\[
\Delta_5, \Delta_4, \Delta_3, \Delta_2, \Delta_1, \Delta_{45}, \Delta_{34}, \Delta_{23}, \Delta_{12}, \Delta_{345}, \Delta_{234}, \Delta_{123}, \Delta_{2345}, \Delta_{1234},
\]
sit at the vertices 
\[
(1,1), (2,1), (3,1), (4,1), (5,1), (2,2), (3,2), (4,2), (5,2), (3,3), (4,3), (5,3), (4,4), (5,4), 
\]
respectively.

\begin{figure}
\begin{tikzpicture}[scale=1.5]
		\node  (1) at (-1, 1) {\fbox{$\Delta_{1}=1$}};
		\node  (2) at (-2, -0) {$\Delta_{2}$};
		\node  (3) at (0, -0) {\fbox{$\Delta_{12}=1$}};
		\node  (4) at (-3, -1) {$\Delta_{3}$};
		\node  (5) at (-1, -1) {$\Delta_{23}$};
		\node  (6) at (1, -1) {\fbox{$\Delta_{123}=1$}};
		\node  (7) at (-4, -2) {$\Delta_{4}$};
		\node  (8) at (-2, -2) {$\Delta_{34}$};
		\node  (9) at (0, -2) {$\Delta_{234}$};
		\node  (10) at (2, -2) {\fbox{$\Delta_{1234}=1$}};
		\node  (11) at (-5, -3) {\fbox{$\Delta_{5}$}};
		\node  (12) at (-3, -3) {\fbox{$\Delta_{45}$}};
		\node  (13) at (-1, -3) {\fbox{$\Delta_{345}$}};
		\node  (14) at (1, -3) {\fbox{$\Delta_{2345}$}};
     
     \draw[->] (1)--(2);
     \draw[->] (2)--(3);
     \draw[->] (2)--(4);
     \draw[->] (4)--(5);
     \draw[->] (5)--(6);
     \draw[->] (5)--(2);
     \draw[->] (3)--(5);
     \draw[->] (4)--(7);
     \draw[->] (7)--(8);
     \draw[->] (8)--(9);
     \draw[->] (9)--(10);
     \draw[->] (8)--(4);
     \draw[->] (5)--(8);
     \draw[->] (6)--(9);
     \draw[->] (9)--(5);
     \draw[->] (7)--(11);
     \draw[->] (12)--(7);
     \draw[->] (13)--(8);
     \draw[->] (14)--(9);
     \draw[->] (8)--(12);
     \draw[->] (9)--(13);
\end{tikzpicture}
            \caption{The initial cluster for $\CC[N]$ ($N \subset SL_5$) and $\widetilde{\mathbb{C}[SL_5]^{N^-}}$. This is also the initial cluster for $\CC[SL_5]^{N-}$ if we replace $\Delta_{1,\ldots,i}=1$ by $\Delta_{1,\ldots,i}$, $i \in [4]$.}
            \label{fig:initial cluster C[N] k is 5}
\end{figure}

Denote by $\widetilde{\mathbb{C}[SL_k]^{N^-}}$ the quotient of $\mathbb{C}[SL_k]^{N^-}$ by identifying the leading principal minors $\Delta_{1,\ldots, i}$ ($i \in [k-1]$) with $1$. The cluster algebra structure on $\mathbb{C}[SL_k]^{N^-}$ induces a cluster algebra structure on $\widetilde{\mathbb{C}[SL_k]^{N^-}}$.

Denote by $N \subset SL_k$ the subgroup of unipotent upper triangular matrices. The ring map $\mathbb{C}[SL_k]^{N^-} \to \CC[N]$
defined by restricting $N^-$-invariant functions on $SL_k$ to the subgroup $N$. This map is onto and transforms the above described cluster structure on $\mathbb{C}[SL_k]^{N^-}$ into a cluster structure on $\CC[N]$ (cf. \cite{FWZ20}). This cluster structure on $\CC[N]$ has an initial cluster consisting of the initial quiver $Q_{k, \triangle}$ and initial cluster variables $\Delta^{(i,p)}$, $i \in I$, $p \in [i]$, see Figure \ref{fig:initial cluster C[N] k is 5}.

\subsection{\texorpdfstring{Monoidal categorification of the cluster algebra structure on $\CC[N]$}{Monoidal categorification of the cluster algebra structure for unipotent group}} \label{subs:monoidal categorification of CN}



Hernandez and Leclerc introduced the notion of a monoidal categorification
of a cluster algebra in \cite{HL10, Kas18}. For a monoidal category $(\mathcal{C}, \otimes)$, a simple object $S$ of $C$ is called {\sl real} if $S \otimes S$ is simple. A simple object $S$ is called {\sl prime} if there exists no non-trivial factorization $S \cong S_1 \otimes S_2$. The monoidal category $\mathcal{C}$ is called a {\sl monoidal categorification} of a cluster algebra $\mathcal{A}$ if the Grothendieck ring of $\mathcal{C}$ is isomorphic to $\mathcal{A}$ and if (1) any cluster monomial of A corresponds to the class of a real simple object of $\mathcal{C}$, and (2)
any cluster variable of $\mathcal{A}$ corresponds to the class of a real simple prime object of $\mathcal{C}$. 

Let $Q$ be an orientation of the Dynkin diagram of $\mathfrak{g}$. Hernandez and Leclerc \cite{HL15} constructed a tensor category $\mathcal{C}_Q$ and showed that $\mathcal{C}_Q$ is a monoidal categorification of the ring $\mathbb{C}[N]$ and its dual canonical basis. To our purpose, we use a special case $\mathcal{C}_{k, \triangle}$ of $\mathcal{C}_Q$. We recall the definition of $\mathcal{C}_{k, \triangle}$ in the following.


Let $\mathfrak{g}$ be a simple Lie algebra and $I$ the set of the vertices of the Dynkin diagram of $\mathfrak{g}$. Denote by $P$ the {\sl weight lattice} of $\mathfrak{g}$ and by $Q \subset P$ the {\sl root lattice} of $\mathfrak{g}$. There is a partial order on $P$ given by $\lambda \le \lambda'$ if and only if $\lambda' - \lambda$ is equal to a non-negative integer linear combination of positive roots.

In this paper, we take $q$ to be a non-zero complex number which is not a root of unity, $\mathfrak{g}=\mathfrak{sl}_k$, and $I=[k-1]$ be the set of vertices of the Dynkin diagram of $\mathfrak{g}$. The {\sl quantum affine algebra} $U_q(\widehat{\mathfrak{g}})$ is a Hopf algebra that is a $q$-deformation of the universal enveloping algebra of $\widehat{\mathfrak{g}}$ \cite{Dri85, Dri87, Jim85}.

We fix $a \in \CC^{\times}$ and denote $Y_{i,s} = Y_{i,aq^s}$, $i \in I$, $s \in \ZZ$. Denote by $\mathcal{P}$ the free abelian group generated by $Y_{i,s}^{\pm 1}$, $i \in I$, $s \in \ZZ$, denote by $\mathcal{P}^+$ the submonoid of $\mathcal{P}$ generated by $Y_{i,s}$, $i \in I$, $s \in \ZZ$, and denote by $\mathcal{P}^+_{k,\triangle}$ the submonoid of $\mathcal{P}^+$ generated by $Y_{i,i-2p}$, $i \in I$, $p \in [i]$. An object $V$ in $\mathcal{C}_{k,\triangle}$ is a finite dimensional $U_q(\widehat{\mathfrak{sl}_k})$-module which satisfies the condition: for every composition factor $S$ of $V$, the highest $l$-weight of $S$ is a monomial in $Y_{i, i-2p}$, $i\in I$, $p \in [i]$. Simple modules in $\mathcal{C}_{k, \triangle}$ are of the form $L(M)$ (cf. \cite{CP95a}, \cite{HL10}), where $M \in \mathcal{P}^+_{k,\triangle}$ and $M$ is called the highest \textit{$l$-weight} of $L(M)$. The elements in $\mathcal{P}^+$ are called {\sl dominant monomials}. Denote by $K(\mathcal{C}_{k,\triangle})$ the Grothendieck ring of $\mathcal{C}_{k,\triangle}$. 

Let $\mathbb{Z}\mathcal{P} = \mathbb{Z}[Y_{i, s}^{\pm 1}]_{i\in I, s\in \mathbb{Z}}$ be the group ring of $\mathcal{P}$. The $q$-{\sl character} of a $U_q(\widehat{\mathfrak{g}})$-module $V$ is given by (cf. \cite{FR})
\begin{align*}
\chi_q(V) = \sum_{m\in \mathcal{P}} \dim(V_{m}) m \in \mathbb{Z}\mathcal{P},
\end{align*}
where $V_m$ is the $l$-weight space with $l$-weight $m$ ($l$-weights of $V$ are identified with monomials in $\mathcal{P}$). It is shown in \cite{FR} that $q$-characters characterize simple $U_q(\widehat{\mathfrak{g}})$-modules up to isomorphism.

Denote by $\wt: \mathcal{P} \to P$ the group homomorphism defined by sending $Y_{i,a}^{\pm} \mapsto \pm \omega_i$, $i \in I$, where $\omega_i$'s are fundamental weights of $\mathfrak{g}$. For a finite dimensional simple $U_q(\widehat{\mathfrak{g}})$-module $L(M)$, we write $\wt(L(M)) = \wt(M)$ and call it the highest weight of $L(M)$. 

Let $\mathcal{Q}^+$ be the monoid generated (in the case that $\mathfrak{g} = \mathfrak{sl}_k$) by
\begin{align} \label{eq:Aia}
A_{i,s} = Y_{i,s+1}Y_{i,s-1} \prod_{j \in I, |j-i|=1} Y_{j,s}^{-1}, \quad i\in I, \ s\in \mathbb{Z}.
\end{align}
There is a partial order $\leq$ on $\mathcal{P}$ (cf. \cite{FM,Nak00}) defined by
\begin{align}
M \leq M' \text{ if and only if } M'M^{-1}\in \mathcal{Q}^{+}. \label{partial order of monomials}
\end{align}

For $i \in I$, $s \in \mathbb{Z}$, $k \in \ZZ_{\ge 1}$, the modules $L(X_{i,k}^{(s)})$, where $X_{i,k}^{(s)} = Y_{i,s} Y_{i,s+2} \cdots Y_{i,s+2k-2}$, are called {\sl Kirillov-Reshetikhin modules}. The modules $L(X_{i,1}^{(s)}) = L(Y_{i,s})$ are called {\sl fundamental modules}.

Hernandez and Leclerc \cite{HL15} proved that the tensor category $\mathcal{C}_{k,\triangle}$ is a monoidal categorification of the ring $\CC[N]$ and its dual canonical basis. The Grothendieck ring $K(\mathcal{C}_{k,\triangle})$ has a cluster algebra structure with an initial seed consisting of the initial quiver $Q_{k,\triangle}$ and initial cluster variables $X_{i, p}^{(i-2p)}$, $i \in I$, $p \in [i]$, where $X_{i, p}^{(i-2p)}$ sits at the position $(i,p)$ of the quiver $Q_{k,\triangle}$, see Figure \ref{fig:initial cluster labeled by modules}. We put trivial modules $\CC$ at the positions $(k, i)$, $i \in [k-1]$, in order to compare with the quiver in Figure \ref{fig:initial cluster C[N] k is 5}. 
  
Recall that in Section \ref{subsec:cluster structure on SLkN and CN}, for $i \in I$, $p \in [i]$, we denote $\Delta^{(i,p)} = \Delta_J$, where $J = \{1,2,\ldots, p-1, p+k-i\}$.

\begin{theorem} [{\cite[Theorems 1.1, 1.2, and 6.1]{HL15}}] \label{thm:isomorphism between Grothendieck ring and CN}
The assignments $L(Y_{i,i-2p}) \mapsto \Delta^{(i,p)}$, $i \in I$, $p \in [i]$, induce an algebraic isomorphism $\Phi_{\CC[N]}: K(\mathcal{C}_{k,\triangle}) \to \mathbb{C}[N]$. 

The assignments $L(Y_{i,i-2p}) \mapsto \Delta^{(i,p)}$, $i \in I$, $p \in [i]$, induce an algebraic isomorphism $\Phi_{\widetilde{\mathbb{C}[SL_k]^{N^-}}}: K(\mathcal{C}_{k,\triangle}) \to \widetilde{\mathbb{C}[SL_k]^{N^-}}$. 
\end{theorem}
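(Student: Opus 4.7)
The plan is to realize both $K(\mathcal{C}_{k,\triangle})$ and $\CC[N]$ (respectively, $\widetilde{\CC[SL_k]^{N^-}}$) as algebras presented by the same initial seed encoded by the quiver $Q_{k,\triangle}$, and then lift the bijection of initial cluster variables $[L(Y_{i,i-2p})] \leftrightarrow \Delta^{(i,p)}$ to a ring isomorphism by invoking the universal mutation dynamics of cluster algebras.

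First I would establish that the classes $[L(Y_{i,i-2p})]$, with $(i,p) \in V_{k,\triangle}$, are algebraically independent in $K(\mathcal{C}_{k,\triangle})$. For this I would use injectivity of the Frenkel--Reshetikhin $q$-character homomorphism $\chi_q \colon K(\mathcal{C}_{k,\triangle}) \hookrightarrow \ZZ\mathcal{P}$: since $\chi_q([L(Y_{i,i-2p})]) = Y_{i,i-2p} + (\text{strictly lower terms in the order } \leq)$ and the monomials $Y_{i,i-2p}$ are algebraically independent in $\ZZ\mathcal{P}$, the classes themselves are independent. Simultaneously I would show that these classes generate $K(\mathcal{C}_{k,\triangle})$ as a ring, using that every simple object of $\mathcal{C}_{k,\triangle}$ appears as a composition factor in a tensor product of fundamentals indexed by $V_{k,\triangle}$, together with the triangularity of $q$-characters with respect to $\leq$.

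Next, I would place these generators at the vertices $(i,p)$ of $Q_{k,\triangle}$ and verify that mutation at any mutable vertex $(i,p)$ produces another class of a simple object. The core technical input is the $T$-system: in $\mathcal{C}_{k,\triangle}$ there is a short exact sequence whose Grothendieck-ring identity reads
\[
[L(Y_{i,i-2p})] \cdot [L(M_{(i,p)})] \;=\; \prod_{(j,q)\to(i,p) \text{ in } Q_{k,\triangle}} [L(Y_{j,j-2q})] \;+\; \prod_{(i,p)\to(j,q)} [L(Y_{j,j-2q})],
\]
where $L(M_{(i,p)})$ is the prescribed ``mutation partner''. On the commutative side, short Pl\"ucker relations among the flag minors $\Delta^{(i,p)}$ give precisely the matching cluster exchange relations in $\CC[N]$ (respectively, $\widetilde{\CC[SL_k]^{N^-}}$). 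Since both algebras are thereby generated from the same initial data subject to the same exchange relations, the assignment $[L(Y_{i,i-2p})] \mapsto \Delta^{(i,p)}$ extends uniquely, by induction on mutation sequences, to a ring isomorphism; surjectivity follows because every cluster variable is reached from the initial cluster, and injectivity follows from the algebraic independence established above.

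The main obstacle is the explicit matching of the $T$-system decomposition of tensor products of Kirillov--Reshetikhin modules in $\mathcal{C}_{k,\triangle}$ with the short Pl\"ucker relations dictated by the arrows of $Q_{k,\triangle}$; this is a nontrivial combinatorial check of which fundamental modules appear on each side of the exchange. A secondary subtlety is the passage to $\widetilde{\CC[SL_k]^{N^-}}$: one must check that setting the leading principal minors $\Delta_{1,\ldots,i}$ to $1$ is compatible, on the categorical side, with treating the frozen ``boundary'' positions as trivial modules, so that the induced map $\Phi_{\widetilde{\CC[SL_k]^{N^-}}}$ remains well-defined and bijective.
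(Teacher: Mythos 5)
The paper does not supply its own proof of this result: it is imported verbatim from Hernandez--Leclerc \cite[Theorems 1.1, 1.2, 6.1]{HL15}. The argument in \cite{HL15} is rather different from yours: it constructs a quantum Grothendieck ring $K_t(\mathcal{C}_Q)$ via Nakajima's $(q,t)$-characters, identifies it with the integral form of $A_v(\mathfrak{n})$ through a derived-Hall-algebra comparison, and then specializes $t\to 1$. Your sketch instead tries to build the map by hand, matching $T$-system exchange relations in $\mathcal{C}_{k,\triangle}$ with short Pl\"ucker relations in $\CC[N]$ and propagating along mutation sequences. This is closer to the spirit of \cite{HL10} than of \cite{HL15}, and it works at the cost of proving more than the stated theorem needs.

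The mutation-matching step, as you have written it, has a gap: matching exchange relations only shows that the \emph{cluster subalgebra} $\mathcal{A}(Q_{k,\triangle})\subset K(\mathcal{C}_{k,\triangle})$ maps isomorphically onto the cluster subalgebra of $\CC[N]$. To conclude you need to know that each ring \emph{equals} its cluster algebra, which is a genuinely hard input that you have not supplied. The cleaner route is hiding in your second paragraph: by $q$-character triangularity one checks that $K(\mathcal{C}_{k,\triangle})$ is a polynomial ring $\ZZ\big[\,[L(Y_{i,i-2p})]\,\big]_{i\in I,\,p\in[i]}$ in $\binom{k}{2}$ algebraically independent generators, while on $N$ one has $\Delta^{(i,p)} = x_{p,\,p+k-i}$ (the leading $(p-1)\times(p-1)$ block of the submatrix is the identity), so the $\Delta^{(i,p)}$ are exactly the coordinate functions and $\CC[N]=\CC[\Delta^{(i,p)}]$ is also a polynomial ring in $\binom{k}{2}$ generators. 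Sending generators to generators then gives the ring isomorphism directly, with no appeal to $T$-systems or Pl\"ucker relations. The same observation handles $\widetilde{\CC[SL_k]^{N^-}}$, since the restriction map $\CC[SL_k]^{N^-}\to\CC[N]$ descends to an isomorphism $\widetilde{\CC[SL_k]^{N^-}}\cong\CC[N]$ once the leading principal minors are set to $1$, which is precisely the normalization built into $\widetilde{\CC[SL_k]^{N^-}}$. The mutation-compatibility you discuss is still true and important for the monoidal-categorification statement, but it is not what the present theorem asserts, and invoking it here leaves the surjectivity argument incomplete.
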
 

We usually write $\Phi_{\CC[N]}$ (respectively, $\Phi_{\widetilde{\mathbb{C}[SL_k]^{N^-}}}$) as $\Phi$ if there is no confusion.

\begin{figure}
\begin{tikzpicture}[scale=1.8]
		\node  (1) at (-1, 1) {\fbox{$\CC$}};
		\node  (2) at (-2, -0) {$Y_{4,2}$};
		\node  (3) at (0, -0) {\fbox{$\CC$}};
		\node  (4) at (-3, -1) {$Y_{3,1}$};
		\node  (5) at (-1, -1) {$Y_{4,2}Y_{4,0}$};
		\node  (6) at (1, -1) {\fbox{$\CC$}};
		\node  (7) at (-4, -2) {$Y_{2,0}$};
		\node  (8) at (-2, -2) {$Y_{3,-1}Y_{3,1}$};
		\node  (9) at (0, -2) {$Y_{4,-2}Y_{4,0}Y_{4,2}$};
		\node  (10) at (2, -2) {\fbox{$\CC$}};
		\node  (11) at (-5, -3) {\fbox{$Y_{1,-1}$}};
		\node  (12) at (-3, -3) {\fbox{$Y_{2,-2}Y_{2,0}$}};
		\node  (13) at (-1, -3) {\fbox{$Y_{3,-3}Y_{3,-1}Y_{3,1}$}};
		\node  (14) at (1, -3) {\fbox{$Y_{4,-4}Y_{4,-2}Y_{4,0}Y_{4,2}$}};
     
     \draw[->] (1)--(2);
     \draw[->] (2)--(3);
     \draw[->] (2)--(4);
     \draw[->] (4)--(5);
     \draw[->] (5)--(6);
     \draw[->] (5)--(2);
     \draw[->] (3)--(5);
     \draw[->] (4)--(7);
     \draw[->] (7)--(8);
     \draw[->] (8)--(9);
     \draw[->] (9)--(10);
     \draw[->] (8)--(4);
     \draw[->] (5)--(8);
     \draw[->] (6)--(9);
     \draw[->] (9)--(5);
     \draw[->] (7)--(11);
     \draw[->] (12)--(7);
     \draw[->] (13)--(8);
     \draw[->] (14)--(9);
     \draw[->] (8)--(12);
     \draw[->] (9)--(13);
\end{tikzpicture}
            \caption{The initial cluster for $\mathcal{C}_{5, \triangle}$.}
            \label{fig:initial cluster labeled by modules}
\end{figure}

\section{The monoid of semi-standard Young tableaux} \label{sec:monoid of semi-standard tableaux}
In this section, we show that the set of semi-standard Young tableaux with at most $k$ rows and with entries in a set $[m]$ form a monoid under certain product ``$\cup$''. 

For $k, m \in \ZZ_{\ge 1}$, denote by ${\rm SSYT}(k, [m])$ the set of all semi-standard Young tableaux (including the empty tableau denoted by $\mathds{1}$) with less or equal to $k$ rows and with entries in $[m]$. For a tableau $T \in {\rm SSYT}(k, [m])$ with $k'$ ($k' \le k$) rows, when we say the $i$th $(i>k')$ row of $T$, we understand that the $i$th row is empty. 




For $T,T' \in {\rm SSYT}(k, [m])$, we denote by $T \cup T'$ the row-increasing tableau whose $i$th row is the union of the $i$th rows of $T$ and $T'$ (as multisets). 

\begin{example}
In ${\rm SSYT}(5, [6])$, we have that 
\begin{align*}
\begin{ytableau} 1 \\ 4 \\ 5 \end{ytableau} \cup \begin{ytableau} 2 \\ 3 \end{ytableau} = \begin{ytableau} 1 & 2 \\ 3 & 4 \\ 5 \end{ytableau}.
\end{align*} 
\end{example}

For $S, T \in {\rm SSYT}(k, [m])$, we say that $S$ is a {\sl factor} of $T$ (denoted by $S \subset T$) if for every $i \in [k]$, the $i$th row of $S$ is contained in the $i$th row of $T$ (as multisets). For a factor $S$ of $T$, we define $\frac{T}{S}=S^{-1}T=TS^{-1}$ to be the row-increasing tableau whose elements in the $i$th row are the elements in the multiset-difference of $i$th row of $T$ and the $i$th row of $S$, for every $i \in [k]$.

We call a tableau $T \in {\rm SSYT}(k, [m])$ {\sl trivial} if it is a one-column tableau with entries $\{1,\ldots, p\}$ for some $p \in [k]$. For any $T \in {\rm SSYT}(k, [m])$, we denote by $T_{\text{red}} \subset T$ the semi-standard tableau obtained by removing a maximal trivial factor from $T$. For $S, T \in {\rm SSYT}(k, [m])$, define $S \sim T$ if $S_{\text{red}} = T_{\text{red}}$. Note that if $T \sim T'$, then $T, T'$ have the same number of rows. It is clear that~``$\sim$'' is an equivalence relation. We denote by ${\rm SSYT}(k, [m],\sim)$ the set of $\sim$-equivalence classes in ${\rm SSYT}(k, [m])$. With a slight abuse of notation, we write $T \in {\rm SSYT}(k, [m],\sim)$ instead of $[T] \in {\rm SSYT}(k, [m],\sim)$.

In \cite[Lemma 3.6]{CDFL}, we proved that the set of all semi-standard Young tableaux of rectangular shape with $k$ rows and with entries in $[m]$ is a monoid with the multiplication ``$\cup$''. Similarly, we have the following result.

\begin{lemma} \label{lem:SSYT is a monoid}
The set ${\rm SSYT}(k, [m])$ (respectively, ${\rm SSYT}(k, [m], \sim)$) form a commutative cancellative monoid with the multiplication $``\cup''$.
\end{lemma}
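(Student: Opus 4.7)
The plan is to first show that $\cup$ is well-defined on ${\rm SSYT}(k,[m])$, then verify the monoid and cancellation axioms row-by-row, and finally descend to the quotient by $\sim$. For well-definedness, the shape of $T \cup T'$ has row lengths $\lambda_i + \mu_i$, which is automatically a partition. The only non-trivial point is column-strictness, which I would prove by the following elementary merging lemma: if $(a_j)_{j=1}^{p}, (b_j)_{j=1}^{q}$ are the sorted $i$-th and $(i+1)$-th rows of $T$ (so $p \ge q$ and $a_j < b_j$ for $j \le q$), and $(a'_j), (b'_j)$ are the analogous rows of $T'$, then the sorted merges $(u_j)$ of $(a_j)$ with $(a'_j)$ and $(v_j)$ of $(b_j)$ with $(b'_j)$ satisfy $u_j < v_j$ for all $j \le q+q'$. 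If $v_j = b_\ell$ (the case $v_j = b'_\ell$ is symmetric), then exactly $j - \ell$ entries of $(b')$ are $\le b_\ell$, and using $a_t < b_t \le b_\ell$ for $t \le \ell$ and $a'_t < b'_t \le b_\ell$ for $t \le j-\ell$, I would exhibit $\ell + (j-\ell) = j$ elements of the combined $a$-sequences strictly less than $v_j$, giving $u_j < v_j$.

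With well-definedness in hand, commutativity and associativity of $\cup$ are immediate from the corresponding properties of multiset union applied row-by-row, and the empty tableau $\mathds{1}$ is a two-sided identity. Cancellation is also immediate: from $T \cup S = T \cup S'$ one subtracts the $i$-th row of $T$ from each side as a multiset, so $S$ and $S'$ have the same row multisets and hence are equal as semi-standard tableaux. This proof parallels the rectangular case already established in the paper referenced as \cite[Lemma 3.6]{CDFL}, but the non-rectangular shape forces the merging argument to be run between differently-shaped rows.

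For the quotient ${\rm SSYT}(k,[m],\sim)$, the remaining task is to show that $\sim$ is a monoid congruence, and the heart is the claim $(R \cup \tau)_{\text{red}} = R_{\text{red}}$ for every $R \in {\rm SSYT}(k,[m])$ and every trivial one-column tableau $\tau$ with $R \cup \tau$ still semi-standard. Writing $\tau$ as a trivial column of length $p$, a trivial factor of $R \cup \tau$ is specified by non-negative integers $c_q$ (numbers of length-$q$ trivial columns to remove) subject to the row-count inequalities $\sum_{q \ge i} c_q \le n_i(R) + [i \le p]$, where $n_i(\cdot)$ counts the $i$'s in row $i$, together with the constraint that the residue has partition shape and remains column-strict. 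I would show that the unique valid maximum extraction for $R \cup \tau$ is obtained from the corresponding maximum for $R$ by adding one extra length-$p$ column, so that the reduced tableau is $R_{\text{red}}$. Combined with the characterization that $T \sim T'$ iff $T$ and $T'$ differ by trivial factors, this yields $S \sim S' \Rightarrow S \cup T \sim S' \cup T$, so $\cup$ descends to the quotient, where commutativity, associativity, and identity are inherited, and cancellation in the quotient follows from cancellation in ${\rm SSYT}(k,[m])$ combined with the congruence. The hardest step will be the claim $(R \cup \tau)_{\text{red}} = R_{\text{red}}$ itself: the partition-shape constraint on the residue makes the ``maximal trivial factor'' subtle to pin down, and verifying that adding $\tau$ shifts the optimum by exactly one length-$p$ column requires a careful case analysis on the counts $n_i(R)$ and the shape profile of $R$.
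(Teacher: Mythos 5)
Your closure argument via the merging lemma is correct and gives a cleaner, more uniform proof than the paper's. The paper reduces to the case where $T'$ has a single column and then runs an explicit three-case analysis (whether $T'$ contributes an entry to row $i$ only, to both rows $i$ and $j$, and whether the insertion index in row $i$ is $\le$, $=$, or $\ge$ the one in row $j$), verifying column-strictness position by position. Your counting argument avoids the case split entirely, though the step ``exactly $j-\ell$ entries of $(b')$ are $\le b_\ell$'' is slightly delicate in the presence of ties; a tie-proof version is to set $\alpha$ (resp.\ $\beta$) to be the number of entries of $(b_t)$ (resp.\ $(b'_t)$) that are $\le v_j$, note $\alpha+\beta\ge j$, and observe that $a_1,\dots,a_\alpha$ and $a'_1,\dots,a'_\beta$ are all strictly less than $v_j$ since $\alpha\le q$, $\beta\le q'$. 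Commutativity, associativity, identity, and cancellation for ${\rm SSYT}(k,[m])$ are handled exactly as in the paper.

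For the quotient you work much harder than the paper, which disposes of ${\rm SSYT}(k,[m],\sim)$ in a single sentence. Your instinct that something must be checked is sound, and you correctly isolate the key fact $(R\cup\tau)_{\text{red}}=R_{\text{red}}$, but you do not actually prove it — you explicitly defer it to a ``careful case analysis'' — so this part of the proposal remains an outline. A cleaner route, which also explains why the paper treats it as obvious, is to note that the trivial tableaux form a submonoid $\mathcal{T}$ of ${\rm SSYT}(k,[m])$ and consider the relation $S\equiv T$ defined by ``there exist $A,B\in\mathcal{T}$ with $A\cup S=B\cup T$''; for a commutative cancellative monoid this is automatically a congruence whose quotient is again commutative and cancellative (cancel the common $T$ to get cancellation downstairs). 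Then the only genuinely new content is that $\equiv$ coincides with the paper's $\sim$: the inclusion $\sim\subseteq\equiv$ is immediate from $S=A\cup S_{\text{red}}$, while $\equiv\subseteq\sim$ is precisely your claim $(R\cup\tau)_{\text{red}}=R_{\text{red}}$. Packaging it this way would separate the formal monoid-theoretic part (which is free) from the one combinatorial fact about $T_{\text{red}}$ that actually needs an argument, instead of re-proving congruence and cancellation from scratch as your outline suggests.
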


\begin{proof}
It is clear that the set ${\rm SSYT}(k, [m])$ form a commutative cancellative monoid implies that the set ${\rm SSYT}(k, [m], \sim)$ form a commutative cancellative monoid. Therefore it suffices to prove the result for ${\rm SSYT}(k, [m])$.

By definition, $``\cup''$ is commutative and associative. Suppose that $A,T,T' \in {\rm SSYT}(k, [m])$ and $A \cup T = A \cup T'$. For every $i \in [k]$, the $i$th row of $T$ (respectively, $T'$) is obtained from the $i$th row of $A \cup T$ (respectively, $A \cup T'$) by removing elements in the $i$th row of $A$ (as multisets). Since $A \cup T = A \cup T'$, we have that the $i$th rows of $T, T'$ are the same for every $i \in [k]$. Therefore $T = T'$. 

We now prove that for $T, T' \in {\rm SSYT}(k, [m])$, we have $T \cup T' \in {\rm SSYT}(k, [m])$. Denote by $S(i)$ the $i$th row of a tableau $S$. We need to prove that for any $i < j$, the $2$-row tableau with the first row $T(i) \cup T'(i)$ and the second row $T(j) \cup T'(j)$ is semi-standard. It suffices to prove this in the case that $T'$ has one column. Let $i, j$ rows of $T$ be
\begin{align*}
\begin{matrix}
a_1 & a_2 & \cdots & a_{r_1} \\
b_1 & b_2 & \cdots & b_{r_2}, 
\end{matrix}
\end{align*}
for some $r_1 \ge r_2$. We have the following cases. 

{\bf Case 1.} $T'$ does not have entry in rows $i$ and $j$. In this case, the result is trivial.

{\bf Case 2.} $T'$ has an entry $a'$ in row $i$ and the row $j$ is empty. There exists $k \in [0, r_1]$ such that $a_1 \le \cdots \le a_k \le a' \le a_{k+1} \le \cdots \le a_{r_1}$. The $i,j$ rows of $T \cup T'$ are 
\begin{equation*}
\begin{matrix}
a_1 & a_2 & \cdots & a_k & a' & a_{k+1} & \cdots & a_{r_1} \\
b_1 & b_2 & \cdots & b_{k} & b_{k+1} & b_{k+2} & \cdots & b_{r_2}.
\end{matrix}
\end{equation*}
We have that $a' \le a_{k+1}<b_{k+1}$ and for all $d \in [k+1, r_2-1]$, $a_d < b_d \le b_{d+1}$. Therefore the $i,j$ rows of $T \cup T'$ form a $2$-row semi-standard tableau. 

{\bf Case 3.} $T'$ has entries $a'$ and $b'$ in rows $i$ and $j$. 
There are $k \in [0, r_1]$, $l \in [0, r_2]$ such that $a_1 \le \cdots \le a_k \le a' \le a_{k+1} \le \cdots \le a_{r_1}$ and $b_1 \le \cdots \le b_l \le b' \le b_{k+1} \le \cdots \le b_{r_2}$. 

If $k=l$, then the $i,j$ rows of $T \cup T'$ form a $2$-row semi-standard tableau. If $k > l$, then the $i,j$ rows of $T \cup T'$ are 
\begin{align*}
\begin{array}{cccccccccccc}
a_1 & a_2 & \cdots & a_l & a_{l+1} & a_{l+2} & \cdots & a_k & a' & a_{k+1} & \cdots & a_{r_1} \\
b_1 & b_2 & \cdots & b_l & b' & b_{l+1} & \cdots & b_{k-1} & b_{k} & b_{k+1} & \cdots & b_{r_2}.
\end{array}
\end{align*}
We have $a' < b' \le b_k$, $a_{l+1} \le a' < b'$, and for all $d \in [l+2, k]$, $a_d \le a' < b' \le b_{d-1}$. Therefore the $i,j$ rows of $T \cup T'$ form a $2$-row semi-standard tableau. 

If $k < l$, then the $i,j$ rows of $T \cup T'$ are 
\begin{align*}
\begin{array}{cccccccccccc}
a_1 & a_2 & \cdots & a_k & a' & a_{k+1} & \cdots & a_{l-1} & a_l & a_{l+1} & \cdots & a_{r_1} \\
b_1 & b_2 & \cdots & b_k & b_{k+1} & b_{k+2} & \cdots & b_{l} & b' & b_{l+1} & \cdots & b_{r_2}.
\end{array}
\end{align*}
We have $a' \le a_{k+1} < b_{k+1}$, $a_{l} < b_l \le b'$, and for all $d \in [k+1, l-1]$, $a_d < b_d \le b_{d+1}$. Therefore the $i,j$ rows of $T \cup T'$ form a $2$-row semi-standard tableau. 
\end{proof}

\section{Isomorphisms of monoids $\mathcal{P}_{k,\triangle}^{+}$ and ${\rm SSYT}(k-1, [k], \sim)$}
In this section, we show that the monoids $\mathcal{P}_{k,\triangle}^{+}$ and ${\rm SSYT}(k-1, [k], \sim)$ are isomorphic. 

\subsection{Factorization of a tableau as a product of fundamental tableaux}

For $i \in I$, $p \in [i]$, denote by $T^{(i,p)}$ the one-column tableau with entries $\{1,2,\ldots, p-1, p+k-i\}$. We call the tableau $T^{(i,p)}$ a {\sl fundamental tableau}. We also use $T_{(l, a)}$ to denote a fundamental tableau with $l$ rows and whose last entry $a$. We have that $T_{(l, a)} = T^{(l+k-a, l)}$. 

There is a total order on the set of one-column fundamental tableaux in ${\rm SSYT}(k, [m])$: for two one column fundamental tableaux $T=T_{(l, a)}, T'=T_{(l', a')}$, $T \le T'$ if either $l>l'$ or $l=l'$, $a \le a'$. For example, 
\begin{align*}
\begin{ytableau}
1 \\ 2 \\ 5
\end{ytableau} < \begin{ytableau}
1 \\ 2 \\ 6
\end{ytableau} < \begin{ytableau}
1 \\ 3
\end{ytableau} < \begin{ytableau}
1 \\ 4
\end{ytableau} < \begin{ytableau}
2
\end{ytableau}.
\end{align*}
If the columns $T_1, \ldots, T_r$ ($T_i$ is the $i$th column of $T$) of a tableau $T \in {\rm SSYT}(k, [m])$ are all fundamental tableaux, then $T_1 \le T_2 \le \cdots \le T_r$ in the above described total order.

\begin{lemma} \label{lem:factorization of tableau as product of fundamental tableaux}
For $k, m \in \mathbb{Z}$, every $T \in {\rm SSYT}(k,[m], \sim)$ can be uniquely factorized as a $\cup$-product of fundamental tableaux and there is a unique $T' \in {\rm SSYT}(k,[m], \sim)$ such that $T' \sim T$ and the columns of $T'$ are fundamental tableaux.
\end{lemma}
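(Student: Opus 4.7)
The plan is to prove existence and uniqueness of the fundamental-column representative separately, using the multiset of \emph{non-trivial entries}
\[
N(T) \;:=\; \{(i, v) \;:\; v > i \text{ and } v \text{ appears in the $i$th row of } T\}
\]
as the central $\sim$-invariant. Note that entries of row $i$ of a semi-standard tableau are always $\geq i$, so each such entry is either equal to $i$ or contributes to $N(T)$.

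For existence, I would set $T' := \bigcup_{(r, a) \in N(T)} T_{(r, a)}$, the $\cup$-product (with multiplicities) of the non-trivial fundamental tableaux indexed by $N(T)$. By Lemma~\ref{lem:SSYT is a monoid} we have $T' \in {\rm SSYT}(k, [m])$, and its columns are by construction exactly those non-trivial fundamental tableaux. To show $T' \sim T$, I would exploit the fact that in the quotient monoid ${\rm SSYT}(k, [m], \sim)$ every trivial column $T_{(q, q)} = \{1, \ldots, q\}$ is identified with the identity, since $(T_{(q,q)})_{\text{red}} = \mathds{1}$. Hence it suffices to exhibit trivial $\cup$-products $U, U'$ of such columns with $T \cup U = T' \cup U'$ as SSYTs, for then $[T] = [T \cup U] = [T' \cup U'] = [T']$ in the quotient. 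Since $T$ and $T'$ share the same entries strictly greater than the row index at each row, letting $\Delta_i := n_i(T') - n_i(T)$ denote the difference in the number of $i$'s at row $i$ and putting $V_q := \Delta_q - \Delta_{q+1}$, $u_q := \max(V_q, 0)$, $u'_q := \max(-V_q, 0)$, the trivial $\cup$-products
\[
U \;:=\; \bigcup_{q} (T_{(q, q)})^{u_q}, \qquad U' \;:=\; \bigcup_{q} (T_{(q, q)})^{u'_q}
\]
satisfy $T \cup U = T' \cup U'$ by a telescoping check on row multisets, using that an SSYT is determined by its row multisets and both sides lie in ${\rm SSYT}(k, [m])$ by Lemma~\ref{lem:SSYT is a monoid}.

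For uniqueness, I would show that $N$ descends to a well-defined map on ${\rm SSYT}(k, [m], \sim)$: removing or adjoining a trivial column $T_{(q, q)}$ only alters the count of entries equal to $i$ at row $i$ for $i \leq q$, and these entries do not contribute to $N$, so $N(T_{\text{red}}) = N(T)$ and hence $T_1 \sim T_2$ implies $N(T_1) = N(T_2)$. On the other hand, for any $T''$ whose columns are all non-trivial fundamental tableaux, the column multiset $\{T_{(r, a)} : (r, a) \in N(T'')\}$ is read off from $N(T'')$, and $T''$ is reconstructed from this multiset by arranging the columns in the total order on fundamental tableaux defined in Section~\ref{sec:monoid of semi-standard tableaux}. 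Hence if $T''_1 \sim T''_2$ are two such representatives, then $N(T''_1) = N(T''_2)$ forces $T''_1 = T''_2$.

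The main obstacle is the existence step: the sign bookkeeping producing nonnegative multiplicities $u_q, u'_q$ from $V_q = \Delta_q - \Delta_{q+1}$, together with the row-by-row telescoping that confirms $T \cup U$ and $T' \cup U'$ share row multisets, must be handled carefully, though the calculations themselves are routine. Once that is done, uniqueness follows cleanly from the $\sim$-invariance of $N$ and the total order on fundamental columns.
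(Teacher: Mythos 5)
Your proof is correct and takes a genuinely different route from the paper's. The paper proves existence by reducing to the case of a one-column tableau and writing down an explicit factorization of that column into fundamental columns; it proves uniqueness by observing that if $T'\sim T''$ both have fundamental columns, then after adjoining suitable trivial tableaux the resulting equality forces the trivial parts to match, and then invokes cancellativity (Lemma~\ref{lem:SSYT is a monoid}) to cancel them. You instead organize both halves around the single invariant $N(T)$, the multiset of non-trivial entries $(i,v)$ with $v>i$ appearing in row $i$. For existence you build $T'$ directly from $N(T)$ as a union of fundamental columns and reconcile the remaining discrepancy in trivial entries by the telescoping choice of $u_q, u'_q$; for uniqueness you observe that $N$ is a $\sim$-invariant and that a tableau with all-fundamental columns is reconstructed from $N$ by sorting. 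Your approach is more unified and makes the underlying combinatorial invariant explicit, at the cost of the sign/telescoping bookkeeping; the paper's approach is shorter on the existence side thanks to the one-column reduction but leans on cancellativity for uniqueness, where your argument is more self-contained. One small point you wave at but should record: the claim that $\bigcup_{(r,a)\in N(T)} T_{(r,a)}$ has exactly the $T_{(r,a)}$ as its columns (rather than some other decomposition into columns) is needed and true, but deserves a sentence; the paper implicitly relies on the same fact in the paragraph preceding the lemma.
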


\begin{proof}
First we prove the existence. It suffices to prove the existence in the case that $T$ is a one-column tableau. Denote by $i_1<\ldots <i_r$ the entries of $T$. If $i_1=1$, then $T \sim T'$, where $T'$ is the union of the fundamental tableaux $T^{(j, i_j)}$, where the entries of $T^{(j, i_j)}$ are $\{1, 2, \ldots, j-1, i_j\}$, $j \in [2,r]$. If $i_1>1$, then $T \sim T'$, where $T'$ is the union of the fundamental tableaux $T^{(j, i_j)}$, $j \in [r]$.

Now we prove uniqueness. Suppose that $T \sim T'$, $T \sim T''$, and the columns of $T', T''$ are fundamental tableaux. Then $T' \sim T''$. It follows that there are trivial tableaux $A, B$ such that $A \cup T' = B \cup T''$. Since the columns of $A, B$ are trivial tableaux and the columns of $T', T''$ are fundamental tableaux, we have that $A = B$. It follows that $T' = T''$ since ${\rm SSYT}(k, [m], \sim)$ is cancellative by Lemma \ref{lem:SSYT is a monoid}. 
\end{proof}

\begin{example}
In ${\rm SSYT}(5, [6], \sim)$, we have that 
\begin{align*}
\scalemath{0.96}{ \begin{ytableau} 1 & 2 \\ 3 & 4 \\ 5 & 6 \end{ytableau} \sim \begin{ytableau} 1 \\ 3 \\ 5 \end{ytableau} \cup \begin{ytableau} 2 \\ 4 \\ 6 \end{ytableau} \cup \begin{ytableau} 1 \\ 2 \end{ytableau} \cup \begin{ytableau} 1  \end{ytableau} \cup \begin{ytableau} 1 \\ 2 \end{ytableau}  = \begin{ytableau} 1 & 1 & 1 & 1 & 2 \\ 2 & 2 & 3 & 4 \\ 5 & 6 \end{ytableau} }.
\end{align*} 
\end{example}

\subsection{Weights on semi-standard tableaux and on products of flag minors} \label{subsec:weights on tableaux and product of flag minors}

There is a bijection between the set of one-column semi-standard tableaux in ${\rm SSYT(k-1, [k], \sim)}$ and the set of (non-trivial) flag minors of $\CC[N]$ sending the one-column tableau with entries in $J \subset [k]$ to the flag minor $\Delta_J$. Denote by $T_{\Delta}$ the tableau corresponding to a flag minor $\Delta$ and $\Delta_T$ the flag minor corresponding to a one-column tableau $T$. For a tableau $T$ with columns $T_1, \ldots, T_r$, we denote by $\Delta_T = \Delta_{T_1} \cdots \Delta_{T_r}$ the {\sl standard monomial} of $T$. For a fraction $ST^{-1}$ of two tableaux $S,T$, we denote $\Delta_{ST^{-1}} = \Delta_S \Delta_T^{-1}$.

\begin{definition}\label{def:weight of a tableau}
For a fundamental tableau $T^{(i,p)} \in {\rm SSYT(k-1, [k], \sim)}$, $i \in I$, $p \in [i]$, we define the {\sl weight} of the tableau as $\wt(T^{(i,p)}) = \omega_i \in P$, where $\omega_i$ is a fundamental weight of $\mathfrak{g}$. We define $\wt(\mathds{1})=0$. 

For a tableau $T \in {\rm SSYT(k-1, [k], \sim)}$, we define the {\sl weight} of $T$ as $\wt(T) = \sum_{j} \wt(T^{(j)})$, where $T = \cup_{j} T^{(j)}$ is the unique factorization of the tableau $T$ into fundamental tableaux. 
\end{definition}

\begin{definition}\label{def:weight of flag minors}
For a flag minor $\Delta \in \CC[N]$, we define the {\sl weight} of $\Delta$ as $\wt(T_{\Delta})$. For a product $\prod_j \Delta^{(j)}$ of flag minors, we define $\wt(\prod_j \Delta^{(j)}) = \sum_j \wt(\Delta^{(j)})$.
\end{definition}

\subsection{Isomorphism of monoids}

By Theorem \ref{thm:isomorphism between Grothendieck ring and CN}, $\{\Delta_T: T \in {\rm SSYT}(k-1, [k], \sim)\}$ is an additive basis of $\mathbb{C}[N]$, $N \subset SL_k$. Therefore for any module $[L(M)] \in K(\mathcal{C}_{k,\triangle})$,  
\begin{align} \label{eq:expression of Phi(L(M)) in terms of tableaux}
\Phi([L(M)]) = \sum_{T \in {\rm SSYT}(k-1, [k], \sim)} c_T \Delta_T \in \CC[N],
\end{align}
for some $c_T \in \CC^{\times}$. 

Define ${\rm Top}(\Phi([L(M)]))$ to be the tableau which appears on the right hand side of (\ref{eq:expression of Phi(L(M)) in terms of tableaux}) with the highest weight. By the same proof as the proof of Lemma 3.22 in \cite{CDFL} using $q$-character theory, we have that ${\rm Top}(\Phi(L(M)))$ exists for every $L(M) \in K(\mathcal{C}_{k,\triangle})$. Moreover, $\wt(L(M)) = \wt({\rm Top}(\Phi([L(M)])))$. 

We define a map 
\begin{align} \label{eq:tildephi}
\widetilde{\Phi}: \mathcal{P}^+_{k, \triangle} \to {\rm SSYT}(k-1, [k], \sim),  \hspace{.7cm}
M \mapsto {\rm Top}(\Phi(L(M))),
\end{align}
and denote $T_M = \widetilde{\Phi}(M)$.

Recall that for $i \in I$, $p \in [i]$, $T^{(i,p)}$ is the one-column tableau with entries $\{1,2,\ldots, p-1, p+k-i\}$. The following lemma follows from Theorem \ref{thm:isomorphism between Grothendieck ring and CN} and the definition of $\widetilde{\Phi}$.
\begin{lemma}\label{lem: image of fundamental module is one column fundamental tableau, weights of fundamental modules and flag minors}
For fundamental modules $L(Y_{i,i-2p}) \in  \mathcal{C}_{k, \triangle}$, $i \in I$, $p \in [i]$, we have that $\widetilde{\Phi}(Y_{i,i-2p}) = T^{(i,p)}$ and $\wt(Y_{i,i-2p}) = \wt(T^{(i,p)}) = \omega_i$. 
\end{lemma}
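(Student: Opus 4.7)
The plan is to observe that for these particular dominant monomials the statement reduces to unwinding definitions, with Theorem~\ref{thm:isomorphism between Grothendieck ring and CN} supplying the one nontrivial input. First I would identify the flag minor $\Delta^{(i,p)}$ explicitly as a standard monomial of a single one-column tableau. By the definition in Section~\ref{subsec:cluster structure on SLkN and CN}, $\Delta^{(i,p)} = \Delta_J$ with $J = \{1,2,\ldots,p-1,\,p+k-i\}$. But by the definition of $T^{(i,p)}$ in Section~4.1, the one-column tableau $T^{(i,p)}$ has exactly the entries in $J$. Under the bijection between one-column tableaux and flag minors described in Section~\ref{subsec:weights on tableaux and product of flag minors}, this gives $\Delta^{(i,p)} = \Delta_{T^{(i,p)}}$.

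Next I would apply Theorem~\ref{thm:isomorphism between Grothendieck ring and CN}, which states that $\Phi([L(Y_{i,i-2p})]) = \Delta^{(i,p)}$. Combining with the previous step, the expansion (\ref{eq:expression of Phi(L(M)) in terms of tableaux}) of $\Phi([L(Y_{i,i-2p})])$ in the additive basis $\{\Delta_T : T \in {\rm SSYT}(k-1,[k],\sim)\}$ has a single term, namely $1 \cdot \Delta_{T^{(i,p)}}$. Consequently ${\rm Top}(\Phi([L(Y_{i,i-2p})])) = T^{(i,p)}$, which by the definition (\ref{eq:tildephi}) of $\widetilde{\Phi}$ is exactly the first claim $\widetilde{\Phi}(Y_{i,i-2p}) = T^{(i,p)}$.

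For the weight equalities, I would simply unpack the two weight conventions. On the monomial side, the group homomorphism $\wt : \mathcal{P} \to P$ defined in Section~\ref{subs:monoidal categorification of CN} sends $Y_{i,a} \mapsto \omega_i$, so $\wt(Y_{i,i-2p}) = \omega_i$, and by the convention $\wt(L(M)) = \wt(M)$ this is also the highest weight of the fundamental module. On the tableau side, $T^{(i,p)}$ is itself a fundamental tableau, so Definition~\ref{def:weight of a tableau} gives $\wt(T^{(i,p)}) = \omega_i$ directly. Both sides therefore agree with $\omega_i$. There is no real obstacle here; the only thing to be careful about is verifying that $\Delta^{(i,p)}$ is literally one of the basis elements $\Delta_T$ (rather than a nontrivial combination), which is immediate from matching the index set $J$ with the entries of $T^{(i,p)}$.
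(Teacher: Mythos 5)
Your argument is correct and matches the paper's intent: the paper dismisses the lemma as "following from Theorem~\ref{thm:isomorphism between Grothendieck ring and CN} and the definition of $\widetilde{\Phi}$," and what you wrote is precisely the spelled-out version of that one-line justification. The key observation—that $\Delta^{(i,p)}$ and $\Delta_{T^{(i,p)}}$ are literally the same basis element because the index set $J=\{1,\ldots,p-1,p+k-i\}$ coincides with the entries of $T^{(i,p)}$, so the expansion \eqref{eq:expression of Phi(L(M)) in terms of tableaux} is a single term—is exactly the right thing to check, and the weight identities are immediate from Definition~\ref{def:weight of a tableau} and the definition of $\wt$ on $\mathcal{P}$.
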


Recall that $T_{(l, a)}$ is a one-column fundamental tableau with $l$ rows and whose last entry is $a$, and $T_{(l, a)} = T^{(l+k-a, l)}$. 

By Lemma \ref{lem:factorization of tableau as product of fundamental tableaux}, every $T \in {\rm SSYT}(k-1,[k],\sim)$ has a unique factorization $T\sim \cup_{i=1}^r T_{(l_i, a_i)}$. We define
\begin{align}
\Psi: {\rm SSYT}(k-1, [k], \sim)  \to \mathcal{P}^+_{k, \triangle},  \hspace{.7cm} T \mapsto \prod_{i=1}^r Y_{l_i+k-a_i, k-a_i-l_i}, \label{eq:psi}
\end{align}
and denote $M_T = \Psi(T)$. We will show that $\Psi$ is the inverse of $\widetilde{\Phi}$. 

\begin{theorem} \label{thm: parameterization of simple modules by tableaux}
The map $\widetilde{\Phi} \colon \mathcal{P}^+_{k, \triangle} \to {\rm SSYT}(k-1, [k],\sim)$ is an isomorphism of monoids and its inverse is $\Psi$. 
\end{theorem}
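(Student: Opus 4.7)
The plan is to prove that $\Psi$ is a monoid isomorphism, that $\widetilde{\Phi}$ is a monoid homomorphism, and then that the two are mutually inverse. First I would observe that $\Psi$ is a monoid isomorphism directly from Lemma~\ref{lem:factorization of tableau as product of fundamental tableaux}: that lemma says every $T \in {\rm SSYT}(k-1,[k],\sim)$ factors uniquely as a $\cup$-product of fundamental tableaux, so ${\rm SSYT}(k-1,[k],\sim)$ is the free commutative monoid on $\{T^{(i,p)} : i \in I,\, p \in [i]\}$; likewise $\mathcal{P}^+_{k,\triangle}$ is by definition the free commutative monoid on $\{Y_{i,i-2p} : i \in I,\, p \in [i]\}$. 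Using $T^{(i,p)} = T_{(p,\,p+k-i)}$, the definition \eqref{eq:psi} gives $\Psi(T^{(i,p)}) = Y_{i,i-2p}$, so $\Psi$ is a bijection on generating sets and extends to a monoid isomorphism.

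The main step is to show that $\widetilde{\Phi}$ is multiplicative, i.e.\ $\widetilde{\Phi}(MM') = \widetilde{\Phi}(M) \cup \widetilde{\Phi}(M')$ for all $M, M' \in \mathcal{P}^+_{k,\triangle}$. I would write
\begin{equation*}
\Phi([L(M)]) = c_{T_M}\Delta_{T_M} + \sum_{T:\,\wt(T)<\wt(T_M)} c_T\Delta_T, \qquad c_{T_M}\neq 0,
\end{equation*}
and similarly for $M'$, and use the multiplicative identity $\Delta_S\Delta_{S'}=\Delta_{S\cup S'}$ on the tableau basis---this holds because $\Delta$ on a class in ${\rm SSYT}(k-1,[k],\sim)$ is the product of the flag minors of the canonical factorization into fundamental columns, which is compatible with the union of such factorizations. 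Combined with additivity of $\wt$ under $\cup$, this gives
\begin{equation*}
\Phi([L(M)])\Phi([L(M')]) = c_{T_M}c_{T_{M'}}\Delta_{T_M\cup T_{M'}} + (\text{terms of weight} < \wt(T_M)+\wt(T_{M'})).
\end{equation*}
On the Grothendieck-ring side, $[L(M)\otimes L(M')] = [L(MM')] + \sum_{M''<MM'} c_{M''}[L(M'')]$ in $K(\mathcal{C}_{k,\triangle})$, where $<$ is the order \eqref{partial order of monomials}. Since $\wt(A_{i,s})=\alpha_i$ is a nonzero simple root, this order strictly decreases $\wt$, so each $\Phi([L(M'')])$ in the correction sum is supported on tableaux of weight $\leq \wt(M'')<\wt(T_M)+\wt(T_{M'})$. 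Therefore the coefficient of $\Delta_{T_M\cup T_{M'}}$ in $\Phi([L(MM')])$ equals $c_{T_M}c_{T_{M'}}\neq 0$ and $T_M\cup T_{M'}$ remains the unique highest-weight summand, yielding $T_{MM'}=T_M\cup T_{M'}$ as desired.

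With both $\widetilde{\Phi}$ and $\Psi$ now monoid homomorphisms, Lemma~\ref{lem: image of fundamental module is one column fundamental tableau, weights of fundamental modules and flag minors} gives $\widetilde{\Phi}(Y_{i,i-2p})=T^{(i,p)}=\Psi^{-1}(Y_{i,i-2p})$ on generators, so $\widetilde{\Phi}\circ\Psi=\id$ on all of ${\rm SSYT}(k-1,[k],\sim)$; since $\Psi$ is invertible we conclude $\widetilde{\Phi}=\Psi^{-1}$. The hard part will be the multiplicativity of $\widetilde{\Phi}$: the leading-term argument depends on the tableau identity $\Delta_S\Delta_{S'}=\Delta_{S\cup S'}$ and on strict weight decrease under the monomial partial order on $\mathcal{P}$---both essential but easily overlooked inputs that tie the Grothendieck-ring decomposition to the tableau expansion.
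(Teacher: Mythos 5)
Your proposal is correct and follows essentially the same approach as the paper: both use the tensor-decomposition identity \eqref{eq:LMLMprime decomposition} together with the weight comparison to establish multiplicativity of $\widetilde{\Phi}$, and then verify the two composites on generators via Lemma~\ref{lem: image of fundamental module is one column fundamental tableau, weights of fundamental modules and flag minors}. The only real difference is one of exposition: you make explicit the identity $\Delta_S\Delta_{S'}=\Delta_{S\cup S'}$ on canonical (fundamental-column) representatives and the resulting leading-term bookkeeping, a step the paper's proof leaves implicit in its jump from ${\rm Top}(\Phi(L(M))\Phi(L(M')))={\rm Top}(\Phi(L(MM')))$ to $\widetilde{\Phi}(MM')=\widetilde{\Phi}(M)\cup\widetilde{\Phi}(M')$, and you shortcut the verification that $\Psi$ is an isomorphism by invoking the free-monoid structures rather than proving multiplicativity and both composite identities separately.
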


\begin{proof}
We first show that $\widetilde{\Phi}$ is a homomorphism of monoids. By the theory of $q$-characters, for any $M, M' \in \mathcal{P}_{k,\triangle}^+$, we have that
\begin{align} \label{eq:LMLMprime decomposition}
[L(M)] [L(M')] = [L(M M')] + \sum_{\tilde{M}, \wt(\tilde{M}) < \wt(MM')} c_{\tilde{M}} [L(\tilde{M})],
\end{align}
for some $c_{\tilde{M}} \in \ZZ_{\ge 0}$. Since $\Phi: K(\mathcal{C}_{k,\triangle}) \to \CC[N]$ is an algebra isomorphism, we have that 
\begin{align*} 
\Phi(L(M))\Phi(L(M')) = \Phi(L(M M')) + \sum_{\tilde{M}, \wt(\tilde{M}) < \wt(MM')} c_{\tilde{M}} \Phi(L(\tilde{M})).
\end{align*}
It follows that ${\rm Top}(\Phi(L(M))\Phi(L(M'))) = {\rm Top}(\Phi(L(MM')))$. Therefore $\widetilde{\Phi}(MM')=\widetilde{\Phi}(M) \cup \widetilde{\Phi}(M')$. 

We now show that $\Psi$ is a homomorphism of monoids. Since $\Psi(T)$ only depends on the equivalence class of $T$, it suffices to check that $\Psi(T)\Psi(T') = \Psi(T \cup T')$ when $T,T'$ are tableaux whose columns are fundamental tableaux. It is clear that the columns of the product $T \cup T'$ are also fundamental tableaux. By definition, the value of $\Psi$ on a tableau whose columns are fundamental tableaux is product of the values of $\Psi$ on every column of the tableau. It follows that $\Psi(T)\Psi(T') = \Psi(T \cup T')$. 

We now check that both composites $\Psi \widetilde{\Phi}$ and $\widetilde{\Phi} \Psi$ are the identity map. It suffices to check this on generators. For any $i \in I$, $p \in [i]$, by Lemma \ref{lem: image of fundamental module is one column fundamental tableau, weights of fundamental modules and flag minors} and the definition of $\Psi$, we have
\begin{align*}
& \Psi \tilde{\Phi}(Y_{i,i-2p}) = \Psi(T^{(i,p)}) = \Psi(T_{(p, k+p-i)}) = Y_{i,i-2p}.
\end{align*}
Every fundamental tableau in ${\rm SSYT}(k-1, [k], \sim)$ is a one-column tableau of the form $T_{(l, a)}$ for some $a \in [2, k]$ and $l \in [a-1]$. We have 
\begin{align*}
& \tilde{\Phi} \Psi(T_{(l, a)}) = \tilde{\Phi}(Y_{l+k-a, k-a-l}) = T^{(l+k-a, l)} = T_{(l, a)}.
\end{align*}
\end{proof}

In Table \ref{table:correspondence: monomials, tableaux in SSYTkm, tableaux in Rkm, minors in CN}, the first column consists of all fundamental modules in $\mathcal{C}_{5, \triangle}$ and the second column consists of the corresponding fundamental tableaux in ${\rm SSYT}(4, [5], \sim)$.

\begin{table}
\begin{equation*}
\begin{tabular}{|c|c|c|c|}
\hline
module & tableau \\
\hline
 $L(Y_{1,-1})$ & $\{5\}$    \\
\hline 
 $L(Y_{2,0})$ & $\{4\}$   \\
\hline
 $L(Y_{2,-2})$ & $\{1,5\}$  \\
\hline
 $L(Y_{3,1})$ & $\{3\}$  \\
\hline
 $L(Y_{3,-1})$ & $\{1,4\}$   \\
\hline
 $L(Y_{3,-3})$ & $\{1,2,5\}$ \\
\hline
 $L(Y_{4,2})$ & $\{2\}$  \\
\hline
 $L(Y_{4,0})$ & $\{1,3\}$   \\
\hline
 $L(Y_{4,-2})$ & $\{1,2,4\}$ \\
\hline
 $L(Y_{4,-4})$ & $\{1,2,3,5\}$ \\
\hline
\end{tabular}
\end{equation*}
\caption{Correspondence between fundamental monomials and fundamental tableaux in ${\rm SSYT}(4, [5], \sim)$. Since all tableaux in the table are one-column tableaux, we represent them by their entries.}
\label{table:correspondence: monomials, tableaux in SSYTkm, tableaux in Rkm, minors in CN}
\end{table}

 
\begin{definition}
For a tableau $T \in \SSYT(k-1, [k], \sim)$, we define an element $\ch_{\CC[N]}(T) \in \CC[N]$ (resp., $\ch_{\widetilde{\mathbb{C}[SL_k]^{N^-}}}(T) \in \widetilde{\mathbb{C}[SL_k]^{N^-}}$) to be the $\Phi_{\CC[N]}([L(M_T)])$ (resp., $\Phi_{\widetilde{\mathbb{C}[SL_k]^{N^-}}}(T)$). 
\end{definition}

Usually we write $\ch_{\CC[N]}(T)$ (respectively, $\ch_{\widetilde{\mathbb{C}[SL_k]^{N^-}}}(T)$) as $\ch(T)$ when we know that we are working on $\CC[N]$ (respectively, $\widetilde{\mathbb{C}[SL_k]^{N^-}}$). 

By Theorems 1.1, 1.2, and 6.1 in \cite{HL15} and Theorem \ref{thm: parameterization of simple modules by tableaux}, we have that following. 

\begin{theorem} \label{thm:chT form dual canonical basis}
The set $\{\ch_{\CC[N]}(T): T \in {\rm SSYT}(k-1, [k], \sim)\}$ (respectively, $\{\ch_{\widetilde{\mathbb{C}[SL_k]^{N^-}}}(T): T \in {\rm SSYT}(k-1, [k], \sim)\}$) is the dual canonical basis of $\CC[N]$ (respectively, $\widetilde{\mathbb{C}[SL_k]^{N^-}}$).
\end{theorem}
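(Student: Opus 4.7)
The statement is a direct combination of two results already in hand: Hernandez--Leclerc's monoidal categorification theorem (Theorem \ref{thm:isomorphism between Grothendieck ring and CN}, quoting \cite[Theorems 1.1, 1.2, 6.1]{HL15}) and the monoid isomorphism $\widetilde{\Phi}\colon \mathcal{P}^+_{k,\triangle}\to {\rm SSYT}(k-1,[k],\sim)$ established in Theorem \ref{thm: parameterization of simple modules by tableaux}. The plan is simply to compose these two bijections and verify that the composition sends a tableau $T$ to $\ch(T)$.

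First I would invoke \cite[Theorems 1.1, 1.2, 6.1]{HL15}: the category $\mathcal{C}_{k,\triangle}$ is a monoidal categorification of $\CC[N]$, so the classes $\{[L(M)] : M \in \mathcal{P}^+_{k,\triangle}\}$ of the simple objects form a $\ZZ$-basis of $K(\mathcal{C}_{k,\triangle})$, and their images under the algebra isomorphism $\Phi_{\CC[N]}$ of Theorem \ref{thm:isomorphism between Grothendieck ring and CN} are precisely the dual canonical basis of $\CC[N]$. In symbols, $\{\Phi_{\CC[N]}([L(M)]) : M \in \mathcal{P}^+_{k,\triangle}\}$ is the dual canonical basis. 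Next I would use Theorem \ref{thm: parameterization of simple modules by tableaux}, which says that $\Psi\colon T\mapsto M_T$ is a bijection from ${\rm SSYT}(k-1,[k],\sim)$ onto $\mathcal{P}^+_{k,\triangle}$. Since by definition $\ch_{\CC[N]}(T)=\Phi_{\CC[N]}([L(M_T)])$, substituting $M=M_T$ simply re-indexes the dual canonical basis by tableaux, giving
\[
\{\ch_{\CC[N]}(T) : T \in {\rm SSYT}(k-1,[k],\sim)\} \;=\; \{\Phi_{\CC[N]}([L(M)]) : M \in \mathcal{P}^+_{k,\triangle}\},
\]
which is the required dual canonical basis.

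For the second assertion I would run the identical argument with $\Phi_{\CC[N]}$ replaced by $\Phi_{\widetilde{\CC[SL_k]^{N^-}}}$, which is also an algebra isomorphism from $K(\mathcal{C}_{k,\triangle})$ by Theorem \ref{thm:isomorphism between Grothendieck ring and CN}; the cluster algebra structures on $\CC[N]$ and $\widetilde{\CC[SL_k]^{N^-}}$ coincide (Section \ref{subsec:cluster structure on SLkN and CN}), so the monoidal categorification of \cite{HL15} applies verbatim to identify its image with the dual canonical basis of $\widetilde{\CC[SL_k]^{N^-}}$. The only point worth double-checking is that the definition of $\widetilde{\Phi}$ via ${\rm Top}$ used $\Phi_{\CC[N]}$: but since both versions of $\Phi$ are algebra isomorphisms from the same Grothendieck ring and both send $[L(Y_{i,i-2p})]$ to the corresponding flag minor $\Delta^{(i,p)}$, the map $T\mapsto M_T$ is intrinsic to $K(\mathcal{C}_{k,\triangle})$, so the re-indexing is identical in both cases. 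There is no real obstacle; the whole proof is a two-line composition once the preceding theorems are in place.
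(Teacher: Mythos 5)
Your proposal is correct and follows exactly the paper's argument: the paper proves this theorem by simply citing Theorems 1.1, 1.2, and 6.1 of \cite{HL15} together with Theorem \ref{thm: parameterization of simple modules by tableaux}, which is the same two-step composition you describe. Your extra remark verifying that the re-indexing is intrinsic to $K(\mathcal{C}_{k,\triangle})$ and hence applies equally to $\Phi_{\widetilde{\CC[SL_k]^{N^-}}}$ is a sensible clarification that the paper leaves implicit.
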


\section{Formula for elements in the dual canonical basis} \label{sec:formula for elements in the dual canonical basis}

In this section, we give an explicit formula for every element $\ch_{\CC[N]}(T)$ (respectively, $\ch_{\widetilde{\mathbb{C}[SL_k]^{N^-}}}$) in the dual canonical basis of $\CC[N]$ (respectively, $\widetilde{\mathbb{C}[SL_k]^{N^-}}$). 

\subsection{Formula for $\ch(T)$} \label{subs:formula for chT}
Let $T \in {\rm SSYT}(k-1,[k],\sim)$ be a tableau which is $\sim$-equivalent to a tableaux $T'$ whose columns are fundamental tableaux and which has $m$ columns. We have that the columns of $T'$ are $T_{(a_i, b_i)}$, $i=1,\ldots,m$, for some $a_1, \ldots, a_m \in [k-1]$, $b_1, \ldots, b_m \in [k]$. Denote ${\bf p}_T = \{ (a_i, b_i): i \in [m]\}$ (as a multi-set). We define ${\bf i}_T =(i_1, \ldots, i_m)$ and ${\bf j}_T = (j_1, \ldots, j_m)$, where $i_1 \leq \dots \leq i_m$ are $a_1, \ldots, a_m$ written in weakly increasing order and $j_1 \leq \dots \leq j_m$ are the elements $b_1, \ldots, b_m$ written in weakly increasing order. For ${\bf c}=(c_1, \ldots, c_m), {\bf d} = (d_1, \ldots, d_m) \in \ZZ^m$, we denote ${\bf p}_{{\bf c}, {\bf d}} = \{ (c_i, d_i): i \in [m] \}$ (as a multi-set). 

Let $S_m$ be the symmetric group on $[m]$. Denote by $\ell(w)$ the length of $w \in S_m$ and denote by $w_0 \in S_m$ be the longest permutation. For ${\bf i} = (i_1, \ldots, i_m) \in \ZZ^m$, denote by $S_{\bf i}$ the subgroup of $S_m$ consisting of elements $\sigma$ such that $i_{\sigma(j)}=i_j$, $j \in [m]$. It is clear that for ${\bf i}, {\bf j} \in \ZZ^m$, ${\bf p}_{w' \cdot {\bf i}, {\bf j}} = {\bf p}_{w \cdot {\bf i}, {\bf j}}$ if and only if $w' \in S_{{\bf j}}wS_{{\bf i}}$. By \cite[Sections 2.4, 2.5]{Bou}, \cite[Proposition 2.3]{Kob}, and \cite[Proposition 2.7]{BKPST}, there is a unique permutation of maximal length in $S_{{\bf j}}wS_{{\bf i}}$. 

For any $T \in {\rm SSYT}(k-1, [k], \sim)$, there exists $w \in S_m$ such that ${\bf p}_{T} = {\bf p}_{w \cdot {\bf i}_T, {\bf j}_T}$. Define $w_T \in S_{{\bf j}_T}wS_{{\bf i}_T}$ to be the unique permutation with maximal length. Then ${\bf p}_{T} = {\bf p}_{w_T \cdot {\bf i}_T, {\bf j}_T}$. It is clear that $w_T$ is also the unique permutation in $S_m$ of maximal length such that ${\bf p}_{T} = {\bf p}_{w_T \cdot {\bf i}_T, {\bf j}_T}$. 

\begin{definition}\label{defn:usemicolonT}
Let $T \in {\rm SSYT}(k-1,[k],\sim)$ and $T \sim T'$, where $T'$ has $m$ columns and all the columns are fundamental tableaux. For $u \in S_m$, we  define $\Delta_{u;T} \in \mathbb{C}[N]$ (respectively, $\widetilde{\CC[SL_k]^{N^-}}$) as follows. If $j_a \in [i_{u(a)}, i_{u(a)}+k]$ for all $a \in [m]$, define the tableau $\alpha(u;T)$ to be the semi-standard tableau whose columns are $T_{(i_{u(a)}, j_a)}$, $a \in [m]$, and define $\Delta_{u; T} = \Delta_{\alpha(u;T)} \in \mathbb{C}[N]$ (respectively,  $\Delta_{u; T} = \Delta_{\alpha(u;T)} \in \widetilde{\CC[SL_k]^{N^-}}$) to be the standard monomial of $\alpha(u;T)$ (cf. Section \ref{subsec:weights on tableaux and product of flag minors}). If $j_a \not\in [i_{u(a)}, i_{u(a)}+k]$ for some $a \in [m]$, then the tableau $\alpha(u;T)$ is {\sl undefined} and $\Delta_{u;T} = 0$. 
\end{definition}

\begin{example} \label{example:compute iT, jT, wT}
Let $T = \begin{ytableau} 1 & 2 \\ 3 & 4 \\ 5 & 6 \end{ytableau} \in {\rm SSYT}(5, [6], \sim)$. Then $T \sim T'$, $T' = \begin{ytableau} 1 & 1 & 1 & 1 & 2 \\ 2 & 2 & 3 & 4 \\ 5 & 6 \end{ytableau}$. We have that ${\bf i}_T = (1,2,2,3,3)$, ${\bf j}_T = (2,3,4,5,6)$, and $w_T = s_2s_4$. For $u =s_2 \in S_5$, $\alpha(u; T)$ is the semi-standard tableau whose columns are $T_{(1,2)}$, $T_{(2,3)}$, $T_{(3,4)}$, $T_{(2,5)}$, $T_{(3,6)}$. We have $\Delta_{u;T} = \Delta_{2}\Delta_{15}\Delta_{13}\Delta_{126}\Delta_{124}$. 
\end{example} 

We have the following theorem.

\begin{theorem} \label{thm: chT for SLkN}
Let $T \in {\rm SSYT}(k-1,[k],\sim)$ and $T \sim T'$ for some tableau $T'$ whose columns are fundamental tableaux and which has $m$ columns. Then 
\begin{align} 
& \ch_{\CC[N]}(T) = \sum_{u \in S_m} (-1)^{\ell(uw_T)} p_{uw_0, w_Tw_0}(1) \Delta_{u; T'} \in \CC[N], \label{eq:Kazhdan-Lusztig character formula tableau formula1} \\
& \ch_{\widetilde{\CC[SL_k]^{N^-}}}(T) = \sum_{u \in S_m} (-1)^{\ell(uw_T)} p_{uw_0, w_Tw_0}(1) \Delta_{u; T'} \in \widetilde{\CC[SL_k]^{N^-}}. \label{eq:Kazhdan-Lusztig character formula tableau formula2} 
\end{align} 
\end{theorem}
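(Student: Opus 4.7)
The plan is to pull the identity back through the Hernandez--Leclerc monoidal categorification of Theorem \ref{thm:isomorphism between Grothendieck ring and CN} and prove it at the level of the Grothendieck ring $K(\mathcal{C}_{k,\triangle})$. Since $\ch_{\CC[N]}(T) = \Phi_{\CC[N]}([L(M_T)])$ and $\ch_{\widetilde{\CC[SL_k]^{N^-}}}(T) = \Phi_{\widetilde{\CC[SL_k]^{N^-}}}([L(M_T)])$, and both isomorphisms send every fundamental module $L(Y_{i,i-2p})$ to the flag minor $\Delta^{(i,p)}$, the two identities \eqref{eq:Kazhdan-Lusztig character formula tableau formula1} and \eqref{eq:Kazhdan-Lusztig character formula tableau formula2} will follow simultaneously from a single identity in $K(\mathcal{C}_{k,\triangle})$, namely
\begin{equation*}
[L(M_T)] = \sum_{u \in S_m} (-1)^{\ell(uw_T)} p_{uw_0, w_Tw_0}(1)\, [V_{u;T'}],
\end{equation*}
where $V_{u;T'}$ is the tensor product of the $m$ fundamental modules whose highest $l$-weights, under the map $\Psi$ of \eqref{eq:psi}, correspond to the columns of the tableau $\alpha(u;T')$ from Definition \ref{defn:usemicolonT}; a tensor factor whose monomial does not lie in $\mathcal{P}^+_{k,\triangle}$ is interpreted as $0$.

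The main external input is the $q$-character formula [CDFL, Theorem 1.3], which, as recalled in the introduction, is obtained from Arakawa--Suzuki's functor [AS] combined with quantum affine Schur--Weyl duality [CP96b]. That result yields a Kazhdan--Lusztig expansion of $[L(M)] \in K(\mathcal{C}_{k,\triangle})$ as an alternating sum indexed by the symmetric group over classes of tensor products of fundamental modules, with coefficients of exactly the shape $(-1)^{\ell(uw_T)} p_{uw_0, w_Tw_0}(1)$. The first task is therefore to transcribe the input data of [CDFL] into the language $({\bf i}_T, {\bf j}_T, w_T)$ attached to $T'$ in Subsection \ref{subs:formula for chT}, and in particular to verify that $w_T$, defined here as the unique maximal length representative of the double coset $S_{{\bf j}_T}\, w\, S_{{\bf i}_T}$, is precisely the permutation whose associated standard tensor product has $L(M_T)$ as its head.

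Once the identity in $K(\mathcal{C}_{k,\triangle})$ is in place, one finishes by applying $\Phi_{\CC[N]}$ (respectively $\Phi_{\widetilde{\CC[SL_k]^{N^-}}}$) termwise. Using Theorem \ref{thm:isomorphism between Grothendieck ring and CN} together with Lemma \ref{lem: image of fundamental module is one column fundamental tableau, weights of fundamental modules and flag minors}, each $[V_{u;T'}]$ is sent to the product of flag minors $\prod_{a} \Delta^{(i_{u(a)}+k-j_a,\, i_{u(a)})}$, which by Definition \ref{defn:usemicolonT} equals $\Delta_{u;T'}$. The vanishing convention $\Delta_{u;T'}=0$ when $j_a \notin [i_{u(a)}, i_{u(a)}+k]$ corresponds exactly to the situation where the tensor product contains a factor whose monomial is not in $\mathcal{P}^+_{k,\triangle}$, so the two sides vanish in parallel. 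Thus a single identity in the Grothendieck ring gives both formulas at once.

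The main obstacle is precisely the combinatorial reconciliation in the previous paragraph: matching the parametrization of standard modules in [CDFL, Theorem 1.3] (which is naturally phrased in segment data coming from the affine Hecke algebra side of Schur--Weyl duality) with the column-by-column fundamental-tableau bookkeeping used here, and in particular translating the double-coset description of $w_T$ into a permutation of tensor factors together with the correct Kazhdan--Lusztig multiplicities $p_{uw_0, w_Tw_0}(1)$. After that dictionary is pinned down, the remainder of the argument is a routine transport across the ring isomorphism $\Phi$.
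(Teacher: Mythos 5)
Your proposal matches the paper's proof in essence: both rely on the Kazhdan--Lusztig $q$-character expansion from \cite{CDFL} (obtained via Arakawa--Suzuki and quantum affine Schur--Weyl duality), reconcile the multisegment parametrization with the fundamental-tableau bookkeeping via the dictionary $[\mu,\lambda] \leftrightarrow T_{(1-\mu,\,k-\lambda)}$, and then transport the resulting identity through the isomorphisms $\Phi_{\CC[N]}$, $\Phi_{\widetilde{\CC[SL_k]^{N^-}}}$ and $\widetilde{\Phi}$. Your phrasing of establishing the identity in $K(\mathcal{C}_{k,\triangle})$ first and pushing forward termwise is the same argument as the paper's (the $q$-character map is injective on the Grothendieck ring), so this is the intended route.
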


\subsection{Proof of Theorem \ref{thm: chT for SLkN}} \label{subs:proof of the formula of chT}

Let $F$ be a non-archimedean local field. Complex, smooth representations of $GL_n(F)$ of finite length are parameterized by multisegments \cite{BernZ77, Zel}. A multisegment is a formal finite sum ${\bf m} = \sum_{i=1}^m \Delta_i$ of segments. A segment $\Delta$ is identified with an interval $[a,b]$, $a, b \in \ZZ$, $a \le b$. 

By quantum Schur-Weyl duality \cite[Section 7.6]{CP96b}, there is a correspondence between multisegments and dominant monomials  
\begin{equation}\label{eq:multisegtomonom}
[a, b] \mapsto Y_{b-a+1, a+b-1},  \hspace{1cm} Y_{i,s} \mapsto [\frac{s-i+2}{2}, \frac{s+i}{2}].
\end{equation}
Denote by $M_{\bf m}$ the monomial corresponding to a multisegment ${\bf m}$ and ${\bf m}_M$ the multisegment corresponding to a monomial $M$. 

We interpret $M_{[a,a-1]}$ as the trivial monomial 
$1 \in \mathcal{P}^+$ and interpret $M_{[a,b]}$ with $b < a-1$ as $0$. For any $m$-tuples $(\mu, \lambda) \in \ZZ^m \times \ZZ^m$, we define a multi-set: 
\begin{align*}
{\rm Fund}_M(\mu, \lambda) = \{M_{[\mu_i,\lambda_i]}: i \in [m] \}.
\end{align*}

For $\lambda = (\lambda_1, \ldots, \lambda_m) \in \ZZ^m$, denote by $S_{\lambda}$ the subgroup of $S_m$ consisting of elements $\sigma$ such that $\lambda_{\sigma(i)} = \lambda_i$. For $\mu=(\mu_1, \ldots, \mu_m)$, $\lambda = (\lambda_1, \ldots, \lambda_m) \in \ZZ^m$, we denote ${\bf m}_{\mu, \lambda} = \sum_{i=1}^m [\mu_i, \lambda_i]$. 

For a multisegment ${\bf m}$ with $m$ terms, there exist unique weakly decreasing tuples $\mu_{\bf m},\lambda_{\bf m} \in \ZZ^m$ and unique permutation of maximal length $w_{\bf m} \in S_m$ such that ${\bf m} = {\bf m}_{w_{\bf m} \cdot \mu_{\bf m}, \lambda_{\bf m}}$ (\cite[Sections 2.4, 2.5]{Bou}, \cite[Proposition 2.3]{Kob}, and \cite[Proposition 2.7]{BKPST}). Note that for any $w, w' \in S_m$ and any $\mu,\lambda \in \ZZ^m$, ${\bf m}_{w' \cdot \mu, \lambda} = {\bf m}_{w \cdot \mu, \lambda}$ if and only if $w' \in S_{\lambda}wS_{\mu}$. The element $w_{\bf m} \in S_m$ is also the unique permutation of maximal length in $S_{\lambda_{\bf m}} w_{\bf m} S_{\mu_{\bf m}}$. We write $\lambda_{\bf m} = \lambda_M$, $\mu_{\bf m} = \mu_M$, $w_{\bf m} = w_M$ for $M = M_{\bf m}$.  

\begin{proof}[{\bf Proof of Theorem \ref{thm: chT for SLkN}}]
We will prove the formula (\ref{eq:Kazhdan-Lusztig character formula tableau formula1}) for $\ch_{\CC[N]}(T)$. The proof of the formula (\ref{eq:Kazhdan-Lusztig character formula tableau formula2}) for $\ch_{\widetilde{\CC[SL_k]^{N^-}}}(T)$ is the same. 

For every finite dimensional $U_q(\widehat{\mathfrak{sl}_k})$-module $L(M)$,
we have that 
\begin{align} \label{eq:Kazhdan-Lusztig character formula qcharacter formula}
\chi_q(L(M)) = \sum_{u \in S_m} (-1)^{\ell(uw_M)} p_{uw_0, w_Mw_0}(1) \prod_{M' \in {\rm Fund}_M(u\mu_M, \lambda_M)} \chi_q(L(M')).
\end{align}
This formula (see Section 5.2 in \cite{CDFL}) is obtained from a result due to Arakawa-Suzuki \cite{AS} (see also Section 10.1 in \cite{LM18}, and \cite{BaCi, Hen}) and from the quantum affine Schur-Weyl duality \cite{CP96b}. In (\ref{eq:Kazhdan-Lusztig character formula qcharacter formula}), we interpret $\chi_q(L(M_{[a,a-1]})) = 1$ and $\chi_q(L(M_{[a,b]})) = 0$ if $b < a-1$. 

By \eqref{eq:multisegtomonom} and Theorem \ref{thm: parameterization of simple modules by tableaux}, there is a correspondence between multisegments and tableaux induced by the following correspondence between segments and fundamental tableaux:
\begin{equation}\label{eq:multisegtotab}
[\mu, \lambda] \mapsto T_{(1-\mu, k-\lambda)},  \quad T_{(l, a)} \mapsto [1-l, k-a],
\end{equation}
where $T_{(1-\mu, k-\lambda)}$ is the one-column tableau with entries $\{1,2,\ldots,-\mu, k-\lambda\}$. Denote by $T_{\bf m}$ the tableau corresponding to the multisegment ${\bf m}$ and denote by ${\bf m}_T$ the multisegment corresponding to the tableau $T$. 

Denote ${\bf i}_T=(i_1, \ldots, i_m)$, ${\bf j}_T = (j_1, \ldots, j_m)$. By \eqref{eq:multisegtotab}, we have that $i_a = 1-\mu_a$, $j_a = k-\lambda_a$ for $a \in [k]$. Therefore $w_T$ defined in Subsection \ref{subs:formula for chT} and $w_{{\bf m}_T}$ defined in this subsection are the same. 

Apply the isomorphism $\Phi_{\CC[N]}$ in Theorem \ref{thm:isomorphism between Grothendieck ring and CN} and the isomorphism $\widetilde{\Phi}$ in Theorem \ref{thm: parameterization of simple modules by tableaux} to the formula (\ref{eq:Kazhdan-Lusztig character formula qcharacter formula}), we obtain the formula (\ref{eq:Kazhdan-Lusztig character formula tableau formula1}).

\end{proof}

\begin{remark}
The difference between the formulas for $\ch_{\CC[N]}(T)$ and $\ch_{\widetilde{\CC[SL_k]^{N^-}}}(T)$ is that the flag minors in (\ref{eq:Kazhdan-Lusztig character formula tableau formula1}) are flag minors in $\CC[N]$ while the flag minors in (\ref{eq:Kazhdan-Lusztig character formula tableau formula2}) are flag minors in $\widetilde{\CC[SL_k]^{N^-}}$. 

For example, in $\widetilde{\CC[SL_4]^{N^-}}$ and $\CC[N]$, we have that $\ch(\begin{ytableau}
1 & 3 \\ 2 \\ 4
\end{ytableau})= \Delta_3 \Delta_{124} - \Delta_4 \Delta_{123}$. On the other hand, in $\CC[N]$, this is equal to $x_{13}x_{34}-x_{14} = \Delta_{13,34}$. 
\end{remark}

We give an example of a computation of $\ch(T)$. 

\begin{example} \label{example:compute ch(T)}
We take $T = \begin{ytableau} 1 & 2 \\ 3 & 4 \\ 5 & 6 \end{ytableau} \in {\rm SSYT}(5, [6], \sim)$ as in Example \ref{example:compute iT, jT, wT}. Then ${\bf i}_T = (1,2,2,3,3)$, ${\bf j}_T = (2,3,4,5,6)$, and $w_T = s_2s_4$. By Theorem \ref{thm: chT for SLkN}, we have that
\begin{align} \label{eq:example of ch(T)}
\begin{split}
\ch(T) = \ &  \Delta_{2} \Delta_{1 4} \Delta_{1 3} \Delta_{1 2 6} \Delta_{1 2 5}  + \Delta_{3} \Delta_{1 5} \Delta_{1 2} \Delta_{1 2 6} \Delta_{1 2 4} + \Delta_{2} \Delta_{1 6} \Delta_{1 5} \Delta_{1 2 4} \Delta_{1 2 3} \\
&  + \Delta_{5} \Delta_{1 4} \Delta_{1 2} \Delta_{1 2 6} \Delta_{1 2 3} + \Delta_{4} \Delta_{1 6} \Delta_{1 2} \Delta_{1 2 5} \Delta_{1 2 3}  - \Delta_{3} \Delta_{1 4} \Delta_{1 2} \Delta_{1 2 6} \Delta_{1 2 5} \\
& - \Delta_{2} \Delta_{1 6} \Delta_{1 4} \Delta_{1 2 5} \Delta_{1 2 3} - \Delta_{2} \Delta_{1 5} \Delta_{1 3} \Delta_{1 2 6} \Delta_{1 2 4}  - \Delta_{5} \Delta_{1 6} \Delta_{1 2} \Delta_{1 2 4} \Delta_{1 2 3} \\
& - \Delta_{4} \Delta_{1 5} \Delta_{1 2} \Delta_{1 2 6} \Delta_{1 2 3}.
\end{split}
\end{align}
\end{example}

Recall that in Section \ref{subsec:weights on tableaux and product of flag minors}, for a fraction $ST^{-1}$ of two tableaux $S,T$, we denote $\Delta_{ST^{-1}} = \Delta_S \Delta_T^{-1}$. For $T \in {\rm SSYT}(k-1,[k])$. we have that $T = T'' \cup T'$, where $T'$ is a tableau whose columns are fundamental tableaux and $T''$ is a fraction of two trivial tableaux. Define $\ch'(T) = \Delta_{T''} \ch_{\widetilde{\CC[SL_k]^{N^-}}}(T')$. 
We have the following conjecture.
\begin{conjecture} \label{conj:dual canonical basis of CSLkN^-}
For every $T \in {\rm SSYT}(k-1,[k])$, $\ch'(T) \in \CC[SL_k]^{N^-}$. Moreover, $\{\ch'(T): T \in {\rm SSYT}(k-1,[k])\}$ is the dual canonical basis of $\CC[SL_k]^{N^-}$. 
\end{conjecture}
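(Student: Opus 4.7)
My approach is to lift the description of the dual canonical basis of $\widetilde{\CC[SL_k]^{N^-}}$ given by Theorems~\ref{thm:chT form dual canonical basis} and~\ref{thm: chT for SLkN} back to $\CC[SL_k]^{N^-}$, tracking the leading principal minors $\Delta_{R_p}$ (where $R_p = \{1,2,\dots,p\}$ for $p \in [k-1]$) that get sent to $1$ by the quotient. Under the decomposition $\CC[SL_k]^{N^-}=\bigoplus_\lambda V(\lambda)$, each $\Delta_{R_p}$ is simultaneously the highest weight vector $v_{\omega_p}$ of $V(\omega_p)$ and a frozen cluster variable. The guiding principle is that multiplication by $\Delta_{R_p}$ should preserve the dual canonical basis and, combinatorially, should correspond to adjoining a trivial column $R_p$. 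This would promote the parametrization by $\SSYT(k-1,[k],\sim)$ of the dual canonical basis of $\widetilde{\CC[SL_k]^{N^-}}$ to a parametrization by $\SSYT(k-1,[k])$ of the dual canonical basis of $\CC[SL_k]^{N^-}$, with the $\sim$-relation encoding precisely the ambiguity of adding or removing trivial columns.

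The proof would split into two concrete claims. The first is the multiplicativity formula
\[
\ch_{\CC[SL_k]^{N^-}}(T \cup R_p) \;=\; \Delta_{R_p}\cdot \ch_{\CC[SL_k]^{N^-}}(T), \qquad T\in\SSYT(k-1,[k]),\ p\in[k-1],
\]
whose iterated application reduces every dual canonical basis element of $\CC[SL_k]^{N^-}$ to one indexed by a tableau without trivial columns. The second is that, for $T'\in\SSYT(k-1,[k])$ whose columns are fundamental tableaux, the Kazhdan--Lusztig expression $\sum_{u\in S_m}(-1)^{\ell(uw_{T'})}p_{uw_0,w_{T'}w_0}(1)\,\Delta_{u;T'}$ with $\Delta_{u;T'}$ interpreted as an honest product of flag minors in $\CC[SL_k]^{N^-}$ equals $\ch_{\CC[SL_k]^{N^-}}(T')$; by Theorem~\ref{thm: chT for SLkN} its image in $\widetilde{\CC[SL_k]^{N^-}}$ is exactly $\ch_{\widetilde{\CC[SL_k]^{N^-}}}(T')$. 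Combining the two, for $T\in\SSYT(k-1,[k])$ written as $T = T''\cup T'$ as in the statement, let $R_Q$ denote the union of the trivial columns introduced when passing from the non-trivial factor $T^{\mathrm{nt}}$ of $T$ to the fundamental factorization $T'=T^{\mathrm{nt}}\cup R_Q$. Applying the multiplicativity both to $T = T^{\mathrm{triv}}\cup T^{\mathrm{nt}}$ and to $T'=T^{\mathrm{nt}}\cup R_Q$ yields
\[
\ch_{\CC[SL_k]^{N^-}}(T) \;=\; \Delta_{T^{\mathrm{triv}}}\Delta_{R_Q}^{-1}\,\ch_{\CC[SL_k]^{N^-}}(T') \;=\; \Delta_{T''}\,\ch_{\widetilde{\CC[SL_k]^{N^-}}}(T') \;=\; \ch'(T),
\]
so the fractional expression defining $\ch'(T)$ automatically lies in $\CC[SL_k]^{N^-}$ and $T\mapsto\ch'(T)$ is the desired bijection with the dual canonical basis.

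The main obstacle is the multiplicativity step, together with the identification of the Kazhdan--Lusztig expression with the dual canonical basis element in $\CC[SL_k]^{N^-}$ itself. Two routes suggest themselves. The representation-theoretic route uses the classical description of Kashiwara's global basis of $V(\lambda)$ by SSYT of shape~$\lambda$ and the fact that multiplication by $v_{\omega_p}$ realizes the Cartan-component inclusion $V(\lambda)\otimes v_{\omega_p}\hookrightarrow V(\lambda)\otimes V(\omega_p)\twoheadrightarrow V(\lambda+\omega_p)$, sending the dual canonical basis element indexed by $T$ to the one indexed by $T\cup R_p$ via the crystal-theoretic concatenation rule. The categorical route would extend Hernandez--Leclerc's monoidal categorification $\mathcal{C}_{k,\triangle}$ of $\widetilde{\CC[SL_k]^{N^-}}$ to a monoidal category whose Grothendieck ring is $\CC[SL_k]^{N^-}$, by formally adjoining invertible simple objects corresponding to the $\Delta_{R_p}$; re-running the $q$-character argument from~\cite{CDFL} used in the proof of Theorem~\ref{thm: chT for SLkN} in this enlarged setting would simultaneously establish both claims. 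The essential difficulty, in either route, is verifying compatibility of this extension with all the structures (cluster, crystal, and quantum affine Schur--Weyl duality) that enter the proof of Theorem~\ref{thm: chT for SLkN}.
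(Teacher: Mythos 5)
This statement is labeled a \emph{Conjecture} in the paper (Conjecture~\ref{conj:dual canonical basis of CSLkN^-}), and the paper offers no proof of it --- only a definition of $\ch'(T)$ and a worked example. So there is no ``paper's own proof'' to compare against; any complete argument here would be a genuine contribution beyond what the paper claims.

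Your proposal correctly identifies the natural route and, to your credit, also correctly identifies where it breaks down. The first claim --- multiplicativity $\ch_{\CC[SL_k]^{N^-}}(T\cup R_p)=\Delta_{R_p}\ch_{\CC[SL_k]^{N^-}}(T)$ --- is itself circular as stated, because the quantity $\ch_{\CC[SL_k]^{N^-}}(T)$ has no definition in the paper: the map $\Phi$ and the element ${\rm Top}(\Phi(L(M)))$ are only defined into $\CC[N]$ and $\widetilde{\CC[SL_k]^{N^-}}$, whose Grothendieck-ring categorification $\mathcal{C}_{k,\triangle}$ sets the leading principal minors to $1$. You would first need to construct an $\SSYT(k-1,[k])$-indexed basis of $\CC[SL_k]^{N^-}$ independently (e.g.\ as the Kashiwara/Lusztig dual canonical basis of $\bigoplus_\lambda V(\lambda)$) and only then prove the multiplicativity and the Kazhdan--Lusztig expansion against that definition. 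The second claim --- that the KL expression, re-read with honest flag minors of $SL_k$, gives the dual canonical basis element of $\CC[SL_k]^{N^-}$ --- is exactly what Theorem~\ref{thm: chT for SLkN} does \emph{not} give: that theorem is proved by pushing the $q$-character formula of \cite{CDFL} through the isomorphism $K(\mathcal{C}_{k,\triangle})\to\widetilde{\CC[SL_k]^{N^-}}$, and there is no analogous categorification of $\CC[SL_k]^{N^-}$ available. Both of your suggested remedies (the crystal-theoretic Cartan-component argument, or an extension of $\mathcal{C}_{k,\triangle}$ by formally inverted frozen objects) are plausible directions, but each requires substantial new work to establish compatibility with the KL formula; neither is carried out. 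In short: the outline is sensible and identifies the right obstructions, but those obstructions are not overcome, which is consistent with the paper leaving the statement as a conjecture.
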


We give an example to explain Conjecture \ref{conj:dual canonical basis of CSLkN^-}.

\begin{example} \label{example:compute ch prime of T}
We take $T = \begin{ytableau} 1 & 2 \\ 3 & 4 \\ 5 & 6 \end{ytableau} \in {\rm SSYT}(5, [6])$. Then $T = T'' \cup T'$, where
$
T' = \begin{ytableau} 1 & 1 & 1 & 1 & 2 \\ 2 & 2 & 3 & 4 \\ 5 & 6 \end{ytableau}$, $T'' = \frac{\mathds{1}}{\begin{ytableau} 1 & 1 & 1 \\ 2 & 2 \end{ytableau}}$. We have that
\begin{align*}
\ch'(T) & = \frac{\ch(T')}{\Delta_{1}\Delta_{12}\Delta_{12}} = \Delta_{136} \Delta_{245} - \Delta_{126} \Delta_{345}  \in \CC[SL_6]^{N^-},
\end{align*}
where $\ch(T')$ is equal to (\ref{eq:example of ch(T)}).
\end{example}

%

\section{Mutation of tableaux} \label{sec:mutations description}

In this section, we give a mutation rule for the cluster algebra $\CC[N]$ (respectively, $\widetilde{\CC[SL_k]^{N^-}}$) using tableaux. 

A finite dimensional $U_q(\widehat{\mathfrak{g}})$-module is called \textit{prime} if it is not isomorphic to a tensor product of two nontrivial $U_q(\widehat{\mathfrak{g}})$-modules (cf. \cite{CP97}). A simple $U_q(\widehat{\mathfrak{g}})$-module $M$ is {\sl real} if $M \otimes M$ is simple (cf. \cite{Lec}). We say that a tableau $T \in \in {\rm SSYT}(k-1,[k],\sim)$ is real (respectively, prime) if $M_T$ is real (respectively, prime). 

By Theorem \ref{thm:chT form dual canonical basis}, every element in the dual canonical basis of $\CC[N]$ (respectively, $\widetilde{\CC[SL_k]^{N^-}}$) is of the form $\ch(T)$, $T \in \in {\rm SSYT}(k-1,[k],\sim)$. 
In \cite{KKKO18, Qin17}, it is shown that cluster monomials in $\CC[N]$ (respectively, $\widetilde{\CC[SL_k]^{N^-}}$) belong to the dual canonical basis and they correspond to real modules in $\mathcal{C}_{k,\triangle}$. The cluster variables in $\CC[N]$ (respectively, $\widetilde{\CC[SL_k]^{N^-}}$) correspond to real prime modules in $\mathcal{C}_{k,\triangle}$. Therefore cluster monomials (respectively, cluster variables) in $\CC[N]$ (respectively, $\widetilde{\CC[SL_k]^{N^-}}$) are also of the form $\ch(T)$, where $T$ is a real (respectively, real prime) tableau in ${\rm SSYT}(k-1,[k],\sim)$. 

In \cite[Section 4]{CDFL}, it is shown that the mutation rule in Grassmannian cluster algebras can be described using semi-standard Young tableaux of rectangular shape. Similarly, we now show that the mutation rule in $\CC[N]$ (respectively, $\widetilde{\CC[SL_k]^{N^-}}$) can be described using semi-standard Young tableaux.

Starting from the initial seed of $\CC[N]$ (respectively, $\widetilde{\CC[SL_k]^{N^-}}$), each time we perform a mutation at a cluster variable $\ch(T_r)$, we obtain a new cluster variable $\ch(T'_r)$ defined recursively by 
\begin{align*}
\ch(T'_r)\ch(T_r) = \prod_{i \to r} \ch(T_i) + \prod_{r \to i} \ch(T_i),
\end{align*}
where $\ch(T_i)$ the cluster variable at the vertex $i$. On the other hand, by Theorem \ref{thm:isomorphism between Grothendieck ring and CN} and the formula (\ref{eq:LMLMprime decomposition}), we have that
\begin{align} \label{eq:decomposition of ch(T)ch(T')}
\ch(T_r) \ch(T'_r) = \ch(T_r \cup T'_r) + \sum_{T''} c_{T''} \ch(T'')
\end{align}
for some $T'' \in {\rm SSYT}(k-1,[k],\sim)$, $\wt(T'')<\wt(T_r \cup T'_r)$, $c_{T''} \in \ZZ_{\ge 0}$. Therefore one of the two tableaux $\cup_{i \to r} T_i$ or $\cup_{r \to i} T_i$ has strictly greater weight than the other, and moreover the one with higher weight is equal to $T_r \cup T'_r$ in ${\rm SSYT}(k-1,[k],\sim)$. Denote by $\max\{\cup_{i \to r} T_i, \cup_{r \to i} T_i \}$ this higher weight tableau. Then 
\begin{align}
T'_r = T^{-1}_r \max\{\cup_{i \to r} T_i, \cup_{r \to i} T_i \}.\label{eq:howtomutate}
\end{align}

\begin{remark}
There is a partial order called {\sl dominance order} in the set of semi-standard Young tableaux (cf. \cite[Section 5.5]{Br07}). 

Let $\lambda = (\lambda_1,\dots,\lambda_\ell)$, $\mu = (\mu_1,\dots,\mu_\ell)$, with $\lambda_1 \geq \cdots \geq \lambda_\ell \geq 0$, $\mu_1 \ge \cdots \ge \mu_{\ell} \ge 0$, be partitions. Then $\lambda \le_{\rm dom} \mu$ in the {\sl dominance order} if $\sum_{j \leq i}\lambda_j \le \sum_{j \leq i}\mu_j$ for $i=1,\dots,\ell$. 

For a semi-standard tableau $T$ in ${\rm SSYT}(k, [m])$ and $i \in [m]$, denote by $T[i]$ the sub-tableau obtained from $T$ by restriction to the entries in $[i]$. For a tableau $T$, let ${\rm sh}(T)$ denote the shape of $T$. For $T,T' \in {\rm SSYT}(k, [m])$ of the same shape, $T \le_{\rm dom} T'$ in the {\sl dominance order} if for every $i \in [i]$, ${\rm sh}(T[i]) \le_{\rm dom} {\rm sh}({T'}[i])$ in the dominance order on partitions. 

 

The {\sl content} of a tableau $T \in {\rm SSYT}(k, [m])$ is the vector $(\nu_1,\dots,\nu_m) \in \ZZ^m$, where $\nu_i$ is the number of $i$-filled boxes in $T$. By a similar proof as the proof of Proposition 3.28 in \cite{CDFL}, for $T,T' \in {\rm SSYT}(k-1,[k])$ with the same content and with the same shape, $T \leq_{\rm dom} T'$ in the dominance order if and only if $M_T \leq M_{T'} \in \mathcal{P}^+$ in the monomial order in (\ref{partial order of monomials}).

In the mutation described above, if we use tableaux in ${\rm SSYT}(k-1, [k])$ (not other tableau representatives of equivalence classes in ${\rm SSYT}(k-1, [k], \sim)$), then in every step, $\cup_{i \to r} T_i$ and $\cup_{r \to i} T_i$ have the same shape and the same content. Therefore in the mutations, one can also use tableaux in ${\rm SSYT}(k-1, [k])$ and use the dominance order on tableaux to compute $\max\{\cup_{i \to r} T_i, \cup_{r \to i} T_i \}$ in (\ref{eq:howtomutate}).
\end{remark}


\begin{example} \label{example:exchange relations}
The following are some examples of exchange relations in $\mathbb{C}[N]$, $N \subset SL_6$, (respectively, $\widetilde{\CC[SL_6]^N}$): $\ch(T_1)\ch(T_2) = \ch(T_3)\ch(T_4)\ch(T_5) + \ch(T_6)\ch(T_7)\ch(T_8)$, where $T_i$'s are the following tableaux respectively
\begin{align*}
& \begin{ytableau}
3  \\
4 \\
5 
\end{ytableau}, \ \begin{ytableau}
2 & 4  \\
3 & 5 \\
4 \\
6
\end{ytableau}, \ \begin{ytableau}
2 \\
3 \\
4 \\
5
\end{ytableau}, \ \begin{ytableau}
4  \\
5 \\
6 
\end{ytableau}, \ \begin{ytableau}
3  \\
4  
\end{ytableau},  \ \begin{ytableau}
3 \\
4 \\
5 \\
6 
\end{ytableau}, \ \begin{ytableau}
2  \\
3 \\
4 
\end{ytableau}, \ \begin{ytableau}
4  \\
5  
\end{ytableau},
\end{align*}
and $\ch(S_1)\ch(S_2) = \ch(S_3)\ch(S_4)\ch(S_5) + \ch(S_6)\ch(S_7)$, where $S_i$'s are the following tableaux respectively
\begin{align*}
\begin{ytableau}
2  \\
4  
\end{ytableau}, \  \begin{ytableau}
1 & 1 & 4  \\
2 & 2 & 5 \\
3 & 5 \\
4 & 6 \\
6
\end{ytableau}, \ \begin{ytableau}
1  \\
2  
\end{ytableau}, \ \begin{ytableau}
2 \\
4 \\
5 \\
6
\end{ytableau}, \ \begin{ytableau}
1 & 4 \\
2 & 5 \\
3 \\
4 \\
6
\end{ytableau}, \ \begin{ytableau}
1 & 4  \\
2 & 5 \\
4 \\
6 
\end{ytableau}, \ \begin{ytableau}
1 & 2  \\
2 & 5 \\
3 \\
4 \\
6 
\end{ytableau}.
\end{align*}
\end{example}

\begin{example}
The cluster variables (not including frozen variables) of $\CC[N]$, $N \subset SL_5$, (respectively, $\widetilde{\CC[SL_5]^N}$) are indexed by the following tableaux:
\begin{align*}
& \begin{ytableau} 2 \end{ytableau}, \: \begin{ytableau} 3 \end{ytableau}, \:\begin{ytableau} 4 \end{ytableau}, \:\begin{ytableau} 1\\ 3 \end{ytableau}, \: \begin{ytableau} 1\\ 4 \end{ytableau}, \:\begin{ytableau} 1\\ 5 \end{ytableau}, \:\begin{ytableau} 2\\ 3 \end{ytableau}, \:\begin{ytableau} 2\\ 4 \end{ytableau}, \: \begin{ytableau} 2\\ 5 \end{ytableau},  \:\begin{ytableau} 3\\ 4 \end{ytableau}, \:\begin{ytableau} 3\\ 5 \end{ytableau}, \: \begin{ytableau} 1\\ 2\\ 4 \end{ytableau}, \:\begin{ytableau} 1\\ 2\\ 5 \end{ytableau}, \:\begin{ytableau} 1\\ 3\\ 4 \end{ytableau}, \: \begin{ytableau} 1\\ 3\\ 5 \end{ytableau}, \: \begin{ytableau} 1\\ 4\\ 5 \end{ytableau}, 
\end{align*}
\begin{align*}
\begin{ytableau} 2\\ 3\\ 4 \end{ytableau},  \:\begin{ytableau} 2\\ 3\\ 5 \end{ytableau}, \: \begin{ytableau} 2\\ 4\\ 5 \end{ytableau},  \: \begin{ytableau} 1\\ 2\\ 3\\ 5 \end{ytableau}, \:\begin{ytableau} 1\\ 2\\ 4\\ 5 \end{ytableau}, \:\begin{ytableau} 1\\ 3\\ 4\\ 5 \end{ytableau}, \ T_1 =\begin{ytableau} 1 & 3\\ 2 \\ 4  \end{ytableau},   \: T_2= \begin{ytableau} 1 & 3\\ 2 \\ 5  \end{ytableau},  \: T_3 = \begin{ytableau} 1 & 4\\ 2 \\ 5  \end{ytableau}, \: T_4 = \begin{ytableau} 1 & 4\\ 3 \\ 5  \end{ytableau}, 
\end{align*}
\begin{align*}
& T_5 = \begin{ytableau} 2 & 4\\ 3 \\ 5  \end{ytableau},    \: T_6 = \begin{ytableau} 1 & 2\\ 3 & 4\\ 5  \end{ytableau}, \: T_7 = \begin{ytableau} 1 & 3\\ 2 & 5\\ 4  \end{ytableau}, \ T_8 = \begin{ytableau} 1 & 3\\ 2 \\ 4 \\ 5  \end{ytableau}, \: T_9 = \begin{ytableau} 1 & 4\\ 2 \\ 3 \\ 5  \end{ytableau}, \: T_{10} = \begin{ytableau} 1 & 1\\ 2 & 4\\ 3 \\ 5  \end{ytableau},
\end{align*}
\begin{align*}
&  T_{11} = \begin{ytableau} 1 & 2\\ 2 & 4\\ 3 \\ 5  \end{ytableau}, \: T_{12} =  \begin{ytableau} 1 & 3\\ 2 & 4\\ 3 \\ 5  \end{ytableau},  \: T_{13} = 
\begin{ytableau} 1 & 3\\ 2 & 4\\ 4 \\ 5  \end{ytableau}, \: T_{14} = \begin{ytableau} 1 & 3\\ 2 & 5\\ 4 \\ 5  \end{ytableau}.
\end{align*}

For a one-column tableau $T$, we have that $\ch(T)$ is the flag minor $\Delta_{i_1, \ldots, i_m}$, where $i_1< \cdots < i_m$ is the set of entries in $T$. For tableaux above with two or more columns, we have that
\begin{align*}
& \ch(T_1) = \Delta_{124}\Delta_3 - \Delta_{123} \Delta_4, \\
& \ch(T_2) = \Delta_{125}\Delta_3 - \Delta_{123} \Delta_5, \\
& \ch(T_3) = \Delta_{125}\Delta_4 - \Delta_{123} \Delta_4, \\
& \ch(T_4) = \Delta_{125}\Delta_{13}\Delta_4 + \Delta_{123}\Delta_{14}\Delta_5 - \Delta_{123}\Delta_{15}\Delta_4 - \Delta_{124}\Delta_{13}\Delta_5, \\
& \ch(T_5) = \Delta_{123}\Delta_{14}\Delta_{2}\Delta_5 + \Delta_{12}\Delta_{124}\Delta_3\Delta_5 + \Delta_{125}\Delta_{13}\Delta_2\Delta_4 \\
& \qquad \qquad - \Delta_{123}\Delta_{15}\Delta_2\Delta_4 - \Delta_{12}\Delta_{125}\Delta_3\Delta_4 - \Delta_{124}\Delta_{13}\Delta_2\Delta_5,
\end{align*}

\begin{align*}
& \ch(T_6) = \Delta_{1 2} \Delta_{1 2 4} \Delta_{1 5} \Delta_{3}- \Delta_{1 2} \Delta_{1 2 3} \Delta_{1 5} \Delta_{4}- \Delta_{1 2 4} \Delta_{1 3} \Delta_{1 5} \Delta_{2} \\
& \qquad \qquad - \Delta_{1 2} \Delta_{1 2 5} \Delta_{1 4} \Delta_{3}+ \Delta_{1 2} \Delta_{1 2 3} \Delta_{1 4} \Delta_{5}+ \Delta_{1 2 5} \Delta_{1 3} \Delta_{1 4} \Delta_{2}, \\
& \ch(T_7) = \Delta_{1 2 3} \Delta_{1 4} \Delta_{5}- \Delta_{1 2 3} \Delta_{1 5} \Delta_{4}- \Delta_{1 2 4} \Delta_{1 3} \Delta_{5}+ \Delta_{1 2 4} \Delta_{1 5} \Delta_{3}, \\
& \ch(T_8) =  \Delta_{1 2 3 5} \Delta_{1 2 4} \Delta_{3}+ \Delta_{1 2 3} \Delta_{1 2 3 4} \Delta_{5}- \Delta_{1 2 3 4} \Delta_{1 2 5} \Delta_{3}- \Delta_{1 2 3} \Delta_{1 2 3 5} \Delta_{4}, \\
& \ch(T_9) = \Delta_{1 2 3 5} \Delta_{4} - \Delta_{1 2 3 4} \Delta_{5},
\end{align*}

\begin{align*}
& \ch(T_{10}) =   \Delta_{1 2 3 5} \Delta_{1 4} - \Delta_{1 2 3 4} \Delta_{1 5}, \\
& \ch(T_{11}) =  \Delta_{1 2 3 5} \Delta_{1 4} \Delta_{2}+ \Delta_{1 2} \Delta_{1 2 3 4} \Delta_{5} 
- \Delta_{1 2 3 4} \Delta_{1 5} \Delta_{2}- \Delta_{1 2} \Delta_{1 2 3 5} \Delta_{4}, \\
& \ch(T_{12}) = \Delta_{1 2 3 5} \Delta_{1 4} \Delta_{3}+ \Delta_{1 2 3 4} \Delta_{1 3} \Delta_{5} 
- \Delta_{1 2 3 4} \Delta_{1 5} \Delta_{3}- \Delta_{1 2 3 5} \Delta_{1 3} \Delta_{4}, \\
& \ch(T_{13}) = \Delta_{1 2 3 4} \Delta_{1 2 5} \Delta_{1 3} \Delta_{4} - \Delta_{1 2 3} \Delta_{1 2 3 4} \Delta_{1 5} \Delta_{4} 
- \Delta_{1 2 3 4} \Delta_{1 2 5} \Delta_{1 4} \Delta_{3} \\
& \qquad \qquad + \Delta_{1 2 3} \Delta_{1 2 3 4} \Delta_{1 4} \Delta_{5}- \Delta_{1 2 3 5} \Delta_{1 2 4} \Delta_{1 3} \Delta_{4}+ \Delta_{1 2 3 5} \Delta_{1 2 4} \Delta_{1 4} \Delta_{3}, \\
& \ch(T_{14}) =   \Delta_{1 2 3 5} \Delta_{1 2 4} \Delta_{1 5} \Delta_{3}+ \Delta_{1 2 3 4} \Delta_{1 2 5} \Delta_{1 3} \Delta_{5}
+ \Delta_{1 2 3} \Delta_{1 2 3 5} \Delta_{1 4} \Delta_{5}  \\
& \qquad \qquad - \Delta_{1 2 3} \Delta_{1 2 3 5} \Delta_{1 5} \Delta_{4}- \Delta_{1 2 3 4} \Delta_{1 2 5} \Delta_{1 5} \Delta_{3}- \Delta_{1 2 3 5} \Delta_{1 2 4} \Delta_{1 3} \Delta_{5}. 
\end{align*}

\end{example}

\section{Application to classification of cluster variables in $\mathbb{C}[N]$} \label{sec:application to classification of cluster variables}

In this section, we apply the results in previous sections to classify cluster variables in $\mathbb{C}[N]$, in the case of $N \subset SL_6$, up to $4$-column tableaux.

We say that a tableau is of rank $r$ if the tableau has $r$ columns. We say that a tableau $T$ is a cluster variable if $\ch(T)$ is a cluster variable. We say that a tableau $T$ is real if $\ch(T)$ satisfies $\ch(T \cup T) = \ch(T)\ch(T)$. That is, $T$ is real if and only if the corresponding $U_q(\widehat{\mathfrak{sl}}_k)$-module $L(M_T)$ is real. 

For $r \in \mathbb{Z}_{\ge 2}$, we call $T_1, \ldots, T_r \in \SSYT(k,[n])$ compatible if $\ch(T_1) \cdots \ch(T_r)=\ch(T_1 \cup \cdots \cup T_r)$. 

By \cite{KKKO18} and \cite{Qin17}, all cluster variables are real and prime. Therefore we first classify real prime tableaux in $\SSYT(k-1, [k])$. 

All rank $1$ tableaux are cluster variables. There are $52$ rank $1$ cluster variables (not including frozen variables) in $\mathbb{C}[N]$, $N \subset SL_6$. These tableaux are in $\SSYT(5, [6])$. 

There are $1652$ semistandard tableaux of rank $2$ in $\SSYT(6, [6])$. There are $1533$ compatible pairs of rank $1$ tableaux in $\SSYT(6, [6])$.
Therefore there are $119$ prime tableaux of rank $2$ in  $\SSYT(6, [6])$. These tableaux are all in $\SSYT(5, [6])$. Among them $118$ tableaux are cluster variables. The only rank $2$ prime tableau which is not cluster variable is $T = \scalemath{0.7}{ \begin{ytableau} 1 & 3 \\ 2 & 5 \\ 4 \\ 6 \end{ytableau} }$. We have that
\begin{align} \label{eq:chT of non-real element in CN}
\begin{split}
\ch(T) = & -2 \Delta_{1 2 3}  \Delta_{1 2 3 4}  \Delta_{1 6}  \Delta_{5}  - \Delta_{1 2 3 5}  \Delta_{1 2 4}  \Delta_{1 6}  \Delta_{3}  - \Delta_{1 2 3}  \Delta_{1 2 3 6}  \Delta_{1 5}  \Delta_{4}- \Delta_{1 2 3 4}  \Delta_{1 2 5}  \Delta_{1 3} \Delta_{6} \\
&  - \Delta_{1 2 3 4}  \Delta_{1 2 6}  \Delta_{1 5}  \Delta_{3} - \Delta_{1 2 3}  \Delta_{1 2 3 5}  \Delta_{1 4}  \Delta_{6}- \Delta_{1 2 3 6}  \Delta_{1 2 4}  \Delta_{1 3}  \Delta_{5} + 2 \Delta_{1 2 3}  \Delta_{1 2 3 4}  \Delta_{1 5}  \Delta_{6} \\
& + \Delta_{1 2 3 6}  \Delta_{1 2 4}  \Delta_{1 5}  \Delta_{3} +\Delta_{1 2 3}  \Delta_{1 2 3 5}  \Delta_{1 6}  \Delta_{4} + \Delta_{1 2 3 4}  \Delta_{1 2 6}  \Delta_{1 3}  \Delta_{5} \\
& + \Delta_{1 2 3 4}  \Delta_{1 2 5}  \Delta_{1 6}  \Delta_{3} + \Delta_{1 2 3}  \Delta_{1 2 3 6}  \Delta_{1 4}  \Delta_{5} + \Delta_{1 2 3 5}  \Delta_{1 2 4}  \Delta_{1 3}  \Delta_{6}.
\end{split}
\end{align}
and
\begin{align*}
\ch(T)\ch(T) = \ch(T \cup T) + \ch( \scalemath{0.7}{ \begin{ytableau} 3 & 5 \\ 4 & 6 \\ 5 \\ 6 \end{ytableau} }).
\end{align*} 

The tableau $T$ corresponds to the simple $U_q(\widehat{\mathfrak{sl}_6})$-module $L(Y_{3, -1} Y_{4, -4} Y_{4, 2} Y_{5, -1} )$, see Theorem \ref{thm: parameterization of simple modules by tableaux}. 

A non-real element in the canonical basis of $U_q(\mathfrak{n})$, $\mathfrak{n}$ is a maximal nilpotent subalgebra of $\mathfrak{sl}_6$, is given in Section 2.7 in \cite{Lec}. Using the generalized quantum affine Schur-Weyl duality \cite{CP96b, KKKOSelecta}, and compare the initial quivers in Figure \ref{fig:initial cluster C[N] k is 5}, Figure \ref{fig:initial cluster labeled by modules}, and the figure in Theorem 1.2 in \cite{Cas20}, we have the following correspondence among fundamental l-weights, good Lyndon words, positive roots in Table \ref{table:correspondence fundamental monomial, positive roots, good Lyndon}, where in the table $\alpha_{ij} = \alpha_i+\cdots +\alpha_j$, $\alpha_i$'s are simple roots. Therefore the vector $b$ in Section 2.7 in \cite{Lec} corresponds to the non-real $U_q(\widehat{\mathfrak{sl}_6})$-module $L(Y_{2,-2}Y_{3,-5}Y_{3,1}Y_{4,-2})$. This module is very similar to $L(Y_{3, -1} Y_{4, -4} Y_{4, 2} Y_{5, -1} )$ that is obtained from the tableau above. 

\begin{table} 
\scalebox{0.88}{
\begin{tabular}{|c|c|c|c|c|c|c|c|c|c|c|c|c|c|c|} 
\hline
$Y_{5,3}$ & $Y_{5,1}$ & $Y_{5,-1}$ & $Y_{5,-3}$ & $Y_{5,-5}$ & $Y_{4,2}$ & $Y_{4,0}$ & $Y_{4,-2}$ & $Y_{4,-4}$ & $Y_{3,1}$ & $Y_{3,-1}$ & $Y_{3,-3}$ & $Y_{2,0}$ & $Y_{2,-2}$ & $Y_{1,-1}$  \\
\hline
$\alpha_1$ & $\alpha_2$ & $\alpha_3$ & $\alpha_4$ & $\alpha_5$ & $\alpha_{12}$ & $\alpha_{23}$ & $\alpha_{34}$ & $\alpha_{45}$ & $\alpha_{13}$ & $\alpha_{24}$ & $\alpha_{35}$ & $\alpha_{14}$ & $\alpha_{25}$ & $\alpha_{15}$ \\
\hline
$(1)$ & $(2)$ & $(3)$ & $(4)$ & $(5)$ & $(12)$ & $(23)$ & $(34)$ & $(45)$ & $(123)$ & $(234)$ & $(345)$ & $(1234)$ & $(2345)$ & $(12345)$ \\
\hline 
\end{tabular} }
\caption{Correspondence among fundamental monomials, positive roots, and good Lyndon words.}
\label{table:correspondence fundamental monomial, positive roots, good Lyndon}
\end{table} 

There are $25740$ semistandard tableaux of rank $3$ in $\SSYT(6, [6])$. There are $21657$ compatible triples of rank $1$ tableaux in $\SSYT(6, [6])$. There are $3913$ compatible pairs of a rank $2$ prime tableau and a rank $1$ tableau in $\SSYT(6, [6])$. 
Therefore there are $170$ prime tableaux of rank $3$ in  $\SSYT(6, [6])$. These tableaux are all in $\SSYT(5, [6])$. All of these tableaux are cluster variables in $\mathbb{C}[N]$. 
  
There are $279279$ semistandard tableaux of rank $4$ in $\SSYT(6, [6])$. There are $212127$ compatible $4$-tuples of rank $1$ tableaux in $\SSYT(6, [6])$. There are $60966$ compatible triples of two rank $1$ tableaux and a rank $2$ prime tableau in $\SSYT(6, [6])$. There are $4322$ compatible pairs of a rank $1$ tableau and a rank $3$ prime tableau in $\SSYT(6, [6])$. There are $1649$ compatible pairs of a rank $2$ prime tableau and a rank $2$ prime tableau in $\SSYT(6, [6])$.  
Therefore there are $215$ prime tableaux of rank $4$ in  $\SSYT(6, [6])$. These tableaux are all in $\SSYT(5, [6])$. Among the $215$ prime tableaux, $214$ of them are cluster variables in $\mathbb{C}[N]$. We conjecture that the remaining one tableau
\begin{align*}
\scalemath{0.7}{
\begin{ytableau} 
1 & 1 & 2 & 4 \\
2 & 3 & 4  \\
3 & 5 & 6  \\
5    \\
6  
\end{ytableau} }
\end{align*}
is not real. 

The computations in this section use SageMath 9.6 \cite{Sage}. The codes and data are available on the webpage: https://sites.google.com/view/jianrong-li. 
 

\end{document}